\documentclass[10pt]{article}
\usepackage{amsmath}
\usepackage[all]{xy}
\usepackage{amssymb} 
\usepackage{amsthm}
\usepackage{graphicx}
\textwidth=125 mm
\textheight=195 mm

\begin{document}  
\title{Space of group orderings, quasi morphisms and bounded cohomology}
\author{Tetsuya Ito}
\date{}

 
\newtheorem{thm}{Theorem}
\newtheorem{cor}{Corollary}
\newtheorem{lem}{Lemma}
\newtheorem{prop}{Proposition}
 {\theoremstyle{definition}
  \newtheorem{defn}{Definition}}
 {\theoremstyle{definition}
  \newtheorem{exam}{Example}}
 {\theoremstyle{definition}
  \newtheorem{rem}{Remark}}
 \newcommand{\Z}{ \mathbb{Z}}
 \newcommand{\Q}{ \mathbb{Q}}
 \newcommand{\R}{ \mathbb{R}}
 \newcommand{\mU}{ \mathcal{U}}
  \newcommand{\InvLO}{ \textrm{InvLO}\,}
  \newcommand{\LO}{ \textrm{LO}\,}
    \newcommand{\BO}{ \textrm{BO}\,}
  \newcommand{\Cof}{ \textrm{Cof}\,}
  \newcommand{\Hom}{\textrm{Hom}}
 
\maketitle
\begin{abstract}
For a group $G$, we construct a quasi morphism from its left orderings and the map from the space of left orderings to the second bounded cohomology. We show that these maps reflect various properties of the group orderings.   
\end{abstract}

\section{Introduction}

A total ordering $<$ of a group $G$ is called a {\it left ordering} if the relation $<$ is preserved by the left action of $G$, that is, $a<b$ implies $ca<cb$ for all $c \in G$. The right orderings are defined by the similar way. An ordering $<$ is called a {\it bi-ordering} if the ordering is both left and right ordering.
We denote by $\LO(G)$, $\BO(G)$ the set of all left orderings and bi-orderings of $G$, respectively.
Each left ordering $<$ is determined by its positive cone $P_{<} =\{ g \in G \: | \: g > 1\}$,  which have the following two properties.
\begin{description}
\item[LO1] $P$ is stable under the multiplication: $P \cdot P \subset P$.
\item[LO2] $P,P^{-1}$ and $\{1\}$ define a partition of $G$: $G = P \coprod P^{-1} \coprod \{1\}$.
\end{description}
Conversely, for a subset $P$ of $G$ having the properties {\bf [LO1]} and {\bf [LO2]}, the ordering $<_{P}$ defined by $x <_{P} y$ if $x^{-1}y \in P$ is a left-ordering whose positive cone is $P$. Thus, $\LO(G)$ is identified with the subset of the power sets $\{0,1\}^{G}$.  

We define the topology of $\LO(G)$ by giving a basis of open sets of the form
\[  \mU_{g} = \{P \in \LO(G)\: | \: 1<_{P} g \} \;\; (g \in G) \]
It is known that $\LO(G)$ is compact, totally discontinuous \cite{s}.
  
  The aim of this paper is to study the relationship between the space $\LO(G)$ and quasi morphisms or bounded cohomology groups. For $x \in G$, a subgroup $H \subset G$, and $P \in \LO(G)$ having some additional properties, we construct a $\Z$-valued quasi morphism $\rho_{x,P}^{H}:H \rightarrow \Z$ and a map $\psi_{x}^{H}$ from the subspace of $\LO(G)$ to the $2$nd bounded cohomology group of $H$.
  We determine the image of $\psi_{x}^{G}$ for a group whose $\R$-coefficient 2nd bounded cohomology vanishes (Theorem \ref{thm:surj}), and show various properties of the maps $\rho_{x,P}^{H}$ and $\psi_{x}^{H}$.
  
\begin{itemize} 
  \item $\psi_{x}^{G}$ is continuous for a group whose $\R$-coefficient 2nd bounded cohomology vanishes (Theorem \ref{thm:continuous}).
  \item $\psi_{x}^{H}$ classifies the dynamics of $H$ if $x$ belongs to $Z_{G}(H)$, the centralizer of $H$ in $G$ (Theorem \ref{thm:clasify}).
  \item $\rho_{x,P}^{H}$ provides a condition on abelian subgroups to be convex. (Theorem \ref{thm:convex})

\end{itemize} 
  
 The plan of the paper is as follows. In section 2 we describe the construction of quasi morphisms and its fundamental properties. In section 3 we provide a cohomological description of the map $\psi_{x}^{H}$ and study the dynamics of the group $G$. 
 Section 4 we use the map $\psi_{x}^{H}$ to study the convex subgroups. \\

\textbf{Acknowledgments.}
 The author gratefully acknowledges the helpful suggestions of professor Toshitake Kohno during the preparation of the paper. He also wishes to express his thanks to Dale Rolfsen and Adam Clay for stimulating discussions. This research was supported by JSPS Research Fellowships for Young Scientists.

\section{Quasi morphisms derived from left-orderings}

First of all we review the definitions of quasi morphism and bounded cohomology which is used in this paper.

Let $K = \Z$ or $\R$. For a group $G$, the bounded cohomology group $\widehat{H_{b}^{*}}(G;K)$ is a cohomology of the bounded cochain complex. In this paper we only treat the $2$nd bounded cohomology classes which are trivial in the usual group cohomology. Let us denote by $H_{b}^{2}(G;K)$ the kernel of the comparison map $\iota: \widehat{H_{b}^{2}}(G;K) \rightarrow H^{2}(G;K)$. We simply refer $H_{b}^{2}(G;K)$ the {\it 2nd bounded cohomology} of $G$. 
 
 A map $\phi: G \rightarrow K$ is called a {\it quasi morphism} of defect $D$ if 
\[ | \phi(g g') - \phi(g) - \phi(g')| \leq D \]
holds for all $g,g' \in G$. A quasi morphism is called an {\it almost homomorphism} if there exists a homomorphism $\tau: G \rightarrow K$ and a constant $C>0$ such that $|\phi(g) - \tau(g)| \leq C$ holds for all $g \in G$.
 Let $\textrm{QMor}(G;K)$ and $\textrm{AHom}(G;K)$ be the set of quasi morphisms and almost homomorphisms, respectively. 
The following lemma is well-known, but to provide an explicit correspondence we include here the proof.

\begin{lem}
\label{lem:boundhom}
\hspace{5cm}
 \begin{enumerate}
 \item $\textrm{QMor}(G,K) \slash \textrm{AHom}(G,K) \cong  H_{b}^{2}(G;K)$.
 \item Let $i: H_{b}^{2}(G;\Z) \rightarrow H_{b}^{2}(G;\R)$ be the natural map. Then $\textrm{Ker}\, i = H^{1}(G;\R) \slash H^{1}(G;\Z)$. In particular, if $H_{b}^{2}(G;\R) = 0$, then $H_{b}^{2}(G;\Z) \cong H^{1}(G;\R) \slash H^{1}(G;\Z)$.
 \end{enumerate} 
 \end{lem}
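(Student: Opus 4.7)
For (1), the plan is to write down explicit maps in both directions. Given a quasi morphism $\phi:G\to K$ of defect $D$, form the 2-cochain $c_\phi(g,g')=\phi(g)+\phi(g')-\phi(gg')$; it is bounded by $D$ and is a 2-cocycle by associativity of multiplication, and by construction it is the coboundary of $\phi$, so it defines a class in $H_b^2(G;K)$. Conversely, any bounded 2-cocycle $c$ that vanishes in $H^2(G;K)$ equals $\delta\phi$ for some (a priori unbounded) 1-cochain $\phi$, and the bound on $\delta\phi$ says exactly that $\phi$ is a quasi morphism. For the kernel, I would argue that $[c_\phi]=0$ in $H_b^2(G;K)$ iff there exists a bounded 1-cochain $\beta$ with $\delta\beta=\delta\phi$, iff $\phi-\beta$ is a homomorphism, iff $\phi\in\textrm{AHom}(G,K)$.

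For (2), I would combine (1) with the following analysis. A class $[\phi]\in H_b^2(G;\Z)$ lies in $\textrm{Ker}\,i$ exactly when $\phi$, regarded as an $\R$-valued quasi morphism, lies in $\textrm{AHom}(G,\R)$: there exists a homomorphism $\tau:G\to\R$ with $\phi-\tau$ bounded. The step I expect to be the crux is the observation that $\tau$ is uniquely determined by $\phi$, since the difference of any two such $\tau$'s would be a bounded homomorphism $G\to\R$, whose image is a bounded subgroup of $\R$ and therefore trivial. The assignment $[\phi]\mapsto[\tau]$ thus descends to a map $\textrm{Ker}\,i\to\Hom(G,\R)/\Hom(G,\Z)=H^1(G;\R)/H^1(G;\Z)$: the quotient by $\Hom(G,\Z)$ records that $\phi$ is defined only modulo $\textrm{AHom}(G,\Z)$, and adding a $\Z$-valued homomorphism to $\phi$ shifts $\tau$ by the same element.

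To finish, I would construct the inverse $[\tau]\mapsto[\lfloor\tau\rfloor]$. The function $g\mapsto\lfloor\tau(g)\rfloor$ is $\Z$-valued and differs from $\tau$ by at most $1$, hence is a quasi morphism; modifying $\tau$ by an element of $\Hom(G,\Z)$ changes $\lfloor\tau\rfloor$ by the same homomorphism, which is trivial in $H_b^2(G;\Z)$. Checking that the two constructions are mutually inverse is then a routine verification, and the ``in particular'' statement is immediate by setting $H_b^2(G;\R)=0$. I do not anticipate any genuine obstacle beyond the uniqueness of $\tau$ highlighted above, which is what pins down the target of the isomorphism.
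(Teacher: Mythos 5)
Your proposal is correct and takes essentially the same route as the paper: part (1) via the correspondence $\phi \mapsto d\phi$ with the kernel identified as almost homomorphisms, and part (2) by extracting the unique real homomorphism $\tau$ at bounded distance from $\phi$ and inverting with $\tau \mapsto \lfloor\tau\rfloor$. Your explicit remark that $\tau$ is unique (a bounded homomorphism to $\R$ is trivial) is left implicit in the paper but is the same well-definedness check.
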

\begin{proof}
 By definition, for $\phi \in \textrm{QMor}(G,K)$, $d\phi$ is a bounded $2$-cocycle which represents an element in $H^{2}_{b}(G;K)$. Conversely, for a bounded $2$-cocycle $c: G\times G \rightarrow K$ which represents an element of $H^{2}_{b}(G;K)$,  there exists a 1-cochain $\phi: G \rightarrow K$ such that $c = d\phi$. since $c$ is bounded, $|c(x,y)| = |d\phi(x,y)| = |\phi(x) -\phi(xy) + \phi(y) | \leq C$ for some constant $C$. Thus, $\phi$ is a quasi morphism. 
 
 Now two bounded cocycles $c= d\phi$ and $c'=d\phi'$ represent the same bounded cohomology class if and only if $c-c' = d(\phi - \phi') = d \psi$ holds for some bounded $1$-cochain $\psi$, so $\tau = \phi-\phi'-\psi: G \rightarrow K$ is a homomorphism.
 Therefore, $c= d\phi$ and $c'=d\phi'$ represents the same bounded cohomology class if and only if $\phi - \phi'$ is an almost homomorphism. This completes the proof of (1).

Let $c = d \phi$ be a bounded 2-cocycle which represents an element of $\textrm{Ker}\, i$. Since $[c]$ is trivial in $\R$-coefficient, there exists a $\R$-valued bounded 1-cochain $\phi_{\R}: G \rightarrow \R$ such that $c = d\phi_{\R}$.
Now $d(\phi_{\R} - \phi) = 0$, so $\tau_{c}= \phi_{\R}-\phi : G \rightarrow \R$ is homomorphism. Let $c = d\phi = d\phi_{\R}$ and $c' = d \phi' = d \phi'_{\R}$ be cohomologous bounded 2-cocycles.
Then $c-c' = d \psi$ holds for some bounded $\Z$-valued $1$-cochain $\psi$. So both $\phi -\phi' -\psi$ and $\phi_{\R}-\phi'_{\R} -\psi$ are $\Z$-valued homomorphisms.
Thus $\tau_{c} -\tau_{c'} = (\phi_{\R} - \phi'_{\R} -\psi) - (\phi -\phi' -\psi) $ is $\Z$-valued   homomorphisms.

Conversely, for a homomorphism $\tau: G \rightarrow \R$, let $c_{\tau} = d[\tau]$, where $[\tau]: G \rightarrow \Z$ is a $\Z$-valued 1-cochain taking an integer part of $\tau$. Then, $c_{\tau}$ represents an element in $H_{b}^{2}(G;\Z)$. It is easy to see that $c_{\tau}$ and $c_{\tau'}$ are cohomologus if $\tau-\tau'$ is a $\Z$-valued homomorphism.    
\end{proof} 
 
In this paper we often impose the condition that $H_{b}^{2}(G;\R) = 0$.
This condition is satisfied if $G$ is {\it amenable}, for example, $G$ is abelian group, solvable group and so on.

Now we define quasi-morphisms from left orderings, and construct maps mentioned in introduction.  
We say an element $x \in G$ is {\it co-final} to a subset $A \subset G$ with respect to the left ordering $P$ of $G$ if for each element $a \in A$, there exists an integer $N$ such that $x^{-N} <_{P} a <_{P} x^{N}$ holds.
We say $x$ is {\it universally co-final} to $A$ if $x$ is co-final to $A$ with respect to all left orderings of $G$.

 Let $H$ be a subgroup of $G$. For an element $x \in G$, let $\Cof_{x}^{H}(G)$ be the set of left orderings of $G$ such that $x$ is co-final to $H$.
We define $\InvLO_{x}^{H}(G)$ as the set of left orderings of $G$ such that the right action of $x$ on the subgroup generated by $H$ and $x$ preserves the ordering. That is, $P \in \InvLO_{x}^{H}(G)$ if and only if $b <_{P} b'$ implies $bx <_{P} b'x$ for all $b,b'$ in the subgroup generated by $x$ and $H$.

It is obvious that $\BO(G) \subset \InvLO_{x}^{G}(G) $, and $\InvLO_{x}(G) = \LO(G)$ if $x$ is central.
In the case $H=G$, we simply denote $\Cof_{x}^{G}(G)$, $\InvLO_{x}^{G}(G)$ by $\Cof_{x}(G)$, $\InvLO_{x}(G)$ respectively.

\begin{lem}
\label{lem:subset}
\hspace{6cm}
\begin{enumerate}
\item $\InvLO_{x}^{H}(G)$ is closed subset of $\LO(G)$.
\item If $H$ is finitely generated, then $\InvLO_{x}^{H}(G) \cap \Cof_{x}^{H}(G)$ is an open subset of $\InvLO_{x}^{H}(G)$. 
\end{enumerate}
\end{lem}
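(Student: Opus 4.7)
For part (1), unfolding the definition of $\InvLO_{x}^{H}(G)$ via positive cones gives that $P \in \InvLO_{x}^{H}(G)$ iff for every $c \in \langle H, x\rangle \cap P$ one has $x^{-1}cx \in P$. Since $P, P^{-1}, \{1\}$ partition $G$, failure of this condition means there exists $c \in \langle H, x\rangle$ with both $c \in P$ and $x^{-1}c^{-1}x \in P$. Hence the complement of $\InvLO_{x}^{H}(G)$ in $\LO(G)$ is $\bigcup_{c \in \langle H, x\rangle}(\mU_{c} \cap \mU_{x^{-1}c^{-1}x})$, manifestly open.

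For part (2), pick a finite generating set $h_{1}, \ldots, h_{k}$ of $H$. The inequality $x^{-N} <_{P} h_{i} <_{P} x^{N}$ is equivalent to $x^{N}h_{i} \in P$ and $h_{i}^{-1}x^{N} \in P$, so the set $V$ of orderings in $\LO(G)$ for which $x$ is co-final to every $h_{i}$ is the open set $\bigcap_{i=1}^{k}\bigcup_{N \in \Z}(\mU_{x^{N}h_{i}} \cap \mU_{h_{i}^{-1}x^{N}})$. Trivially $\Cof_{x}^{H}(G) \subset V$, so the task reduces to showing $V \cap \InvLO_{x}^{H}(G) \subset \Cof_{x}^{H}(G)$.

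To that end, fix $P \in V \cap \InvLO_{x}^{H}(G)$ and let $D = \{g \in \langle H, x\rangle \mid x \text{ is co-final to } g \text{ w.r.t. } P\}$. I claim $D$ is a subgroup. Inverse closure follows from $a <_{P} b \Leftrightarrow b^{-1} <_{P} a^{-1}$. For multiplicative closure, given $g, h \in D$ with $x^{-N} <_{P} g <_{P} x^{N}$ and $x^{-M} <_{P} h <_{P} x^{M}$ (positive $N, M$), left-invariance yields $gh <_{P} gx^{M}$, and right-invariance by $x^{M}$, which preserves $<_{P}$ on $\langle H, x\rangle$ precisely because $P \in \InvLO_{x}^{H}(G)$, upgrades $g <_{P} x^{N}$ to $gx^{M} <_{P} x^{N+M}$; chaining gives $gh <_{P} x^{N+M}$, and the symmetric argument bounds $gh$ below by $x^{-(N+M)}$. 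Since $h_{1}, \ldots, h_{k}, x \in D$ by hypothesis and $D$ is a subgroup, $D = \langle H, x\rangle \supset H$, so $P \in \Cof_{x}^{H}(G)$.

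The main obstacle is the subgroup argument in the last paragraph, which must balance both left- and right-invariance of $<_{P}$ by powers of $x$; both the finite generation of $H$ (to reduce an infinite quantifier to a finite one) and the right-$x$-invariance built into $\InvLO_{x}^{H}(G)$ (to propagate co-finality through products) are essential, and the conclusion can fail if either hypothesis is dropped.
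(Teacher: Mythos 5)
Your part (1) is correct and takes the same route as the paper: you exhibit the complement of $\InvLO_{x}^{H}(G)$ as $\bigcup_{c \in \langle H,x\rangle}(\mU_{c} \cap \mU_{x^{-1}c^{-1}x})$, a union of basic open sets (this is in fact written more carefully than the formula printed in the paper). Part (2) also follows the paper's strategy --- reduce co-finality to the finite generating set and express the resulting set through the subbasis $\{\mU_{g}\}$ --- and you go further than the paper by actually proving the reduction, which the paper only asserts as an ``observation.''

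However, one step of that reduction is justified incorrectly. You claim inverse closure of $D$ ``follows from $a <_{P} b \Leftrightarrow b^{-1} <_{P} a^{-1}$.'' That equivalence holds for bi-orderings but is false for left orderings in general: $a <_{P} b$ means $a^{-1}b \in P$, while $b^{-1} <_{P} a^{-1}$ means $ba^{-1} \in P$, and these two elements are conjugate, not equal. Since $P$ is only assumed to lie in $\InvLO_{x}^{H}(G)$ --- right-invariant under $x$ on $\langle H, x\rangle$, not right-invariant outright --- you cannot invoke order reversal under inversion. The conclusion you need is nevertheless true, and the repair uses exactly the two invariances you already exploit for products: from $x^{-N} <_{P} g$, left-multiply by $g^{-1}$ to get $g^{-1}x^{-N} <_{P} 1$, then right-multiply by $x^{N}$ (legitimate inside $\langle H, x\rangle$, since right multiplication by $x$ is an order-preserving bijection there and hence so is its inverse) to get $g^{-1} <_{P} x^{N}$; symmetrically, $g <_{P} x^{N}$ yields $x^{-N} <_{P} g^{-1}$. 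With that substitution your argument is complete and correct.
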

\begin{proof}

Let $H'$ be the subgroup of $G$ generated by $H$ and $x$.
Then the set $\InvLO_{x}^{H}(G)$ is written by
\[ \InvLO_{x}^{H}(G) = \bigcap_{h' \in H'} (\mU_{h'} \cap \mU_{x^{-1}h'x}). \]
Observe that $\mU_{h}$ is the complement of $\mU_{h^{-1}}$, hence it is closed.
Thus $\InvLO_{x}^{H}(G)$ is closed subset of $\LO(G)$.

Now assume that $H$ is finitely generated, and let $h_{1},\ldots,h_{n}$ be a generator of $H$.
To show (2), we first observe that for $P \in \InvLO_{x}^{H}(G)$, $x$ is co-final to $H$ if and only if $x$ is co-final to the generating subset $\{h_{1},\ldots, h_{n}\}$.
The set $\Cof_{x}^{H}(G) \cap \InvLO_{x}^{H}(G)$ is written as
\begin{eqnarray*}
 \Cof_{x}^{H}(G) \cap  \InvLO_{x}^{H}(G)& = & \left[ \mU'_{x} \cap \left\{ \bigcap_{i=1}^{n}\bigcup_{N\geq 1} (\mU'_{h_{i}^{-1}x^{N}}\cap \mU'_{x^{N}h_{i}})  \right\} \right] \\
 &  & \hspace{0.5cm} \bigcup \left[ \mU'_{x^{-1}} \cap \left\{ \bigcap_{i=1}^{n}\bigcup_{N\geq 1} (\mU'_{h_{i}^{-1}x^{-N}}\cap \mU'_{x^{-N}h_{i}})  \right\} \right]
\end{eqnarray*}
where $\mU'_{g} = \mU_{g} \cap \InvLO_{x}^{H}(G)$.
 Hence $\Cof_{x}^{H}(G) \cap \InvLO_{x}^{H}(G)$ is open in $\InvLO_{x}^{H}(G)$.
\end{proof}
For an ordering $P \in \InvLO_{x}^{H}(G) \cap \Cof_{x}^{H}(G)$,
we define the map $\rho_{x,P}^{H}:H \rightarrow \Z$ by
\[ \rho_{x,P}^{H}(h) = 
\left\{ 
\begin{array}{l}
N \; \textrm{ such that } x^{N} \leq_{P} h <_{P} x^{N+1} \textrm{  if } x >_{P} 1 \\ 
N \; \textrm{ such that } x^{-N-1} <_{P} h \leq_{P} x^{-N} \textrm{  if } x <_{P} 1. \\
\end{array} 
\right.
\]

\begin{lem}
\label{lem:quasi}
$\rho_{x,P}^{H}$ is a quasi morphism of defect $1$.
\end{lem}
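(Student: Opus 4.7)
The plan is to assume without loss of generality that $x >_P 1$ (the case $x <_P 1$ being entirely analogous by symmetry), fix $h, h' \in H$ with $\rho_{x,P}^{H}(h) = N$ and $\rho_{x,P}^{H}(h') = N'$, and directly sandwich the product $hh'$ between consecutive powers of $x$. By definition of $\rho_{x,P}^{H}$ we have
\[ x^{N} \leq_{P} h <_{P} x^{N+1}, \qquad x^{N'} \leq_{P} h' <_{P} x^{N'+1}. \]
Applying left-invariance of $<_{P}$ to the second chain, with left multiplication by $h$, gives $hx^{N'} \leq_{P} hh' <_{P} hx^{N'+1}$.

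The key step, and the only place where the hypothesis $P \in \InvLO_{x}^{H}(G)$ is used, is to move from $h$ to $hx^{N'}$ and $hx^{N'+1}$ by right multiplication. Since $h, x \in \langle H, x\rangle$ and right multiplication by $x$ (hence by any power $x^{N'}$) preserves $<_{P}$ on this subgroup, the inequality $x^{N} \leq_{P} h <_{P} x^{N+1}$ yields
\[ x^{N+N'} \leq_{P} hx^{N'} <_{P} x^{N+N'+1}, \qquad x^{N+N'+1} \leq_{P} hx^{N'+1} <_{P} x^{N+N'+2}. \]
Concatenating these with the sandwich for $hh'$ gives $x^{N+N'} \leq_{P} hh' <_{P} x^{N+N'+2}$, which forces $\rho_{x,P}^{H}(hh') \in \{N+N', N+N'+1\}$. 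Hence
\[ \bigl|\rho_{x,P}^{H}(hh') - \rho_{x,P}^{H}(h) - \rho_{x,P}^{H}(h')\bigr| \leq 1, \]
so $\rho_{x,P}^{H}$ is a quasi morphism of defect at most $1$.

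There is no real obstacle here; the only subtlety is ensuring that the right multiplications $h \mapsto hx^{N'}$ and $h \mapsto hx^{N'+1}$ are legitimate order-preserving operations, which is precisely why we restricted to $P \in \InvLO_{x}^{H}(G)$ and why that definition required preservation of the ordering on the subgroup generated by $H$ and $x$ (not just on $H$). The case $x <_{P} 1$ is handled by the same argument applied to $x^{-1}$, or equivalently by reading the defining inequalities in reverse; no new ideas are needed.
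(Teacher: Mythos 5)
Your argument is correct and is essentially the paper's own proof: sandwich $h$ and $h'$ between consecutive powers of $x$, left-multiply one chain by $h$, and use the right-invariance under $x$ guaranteed by $P \in \InvLO_{x}^{H}(G)$ to combine the two chains into $x^{N+N'} \leq_{P} hh' <_{P} x^{N+N'+2}$. The paper states this more tersely but takes the identical route, so there is nothing to add.
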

\begin{proof}
We prove the case $x >_{P} 1$. The other case is similar.
For  $h,h' \in H$, let $x^{N} \leq_{P} h <_{P} x^{N+1}$ and $x^{M} \leq_{P} h <_{P} x^{M+1}$.
Then, $ hx^{M} \leq_{P} hh' <_{P} hx^{M+1} $. Since the ordering $<_{P}$ is invariant under the right multiplication of $x$, we conclude that $x^{M+N} \leq_{P}  hh' <_{P} x^{M+N+2}$.
\end{proof}

Now let us define the stable map $\overline{\rho^{H}_{x,P}} : H \rightarrow \R$ by 
\[ \overline{\rho^{H}_{x,P}} (h)= \lim_{N \rightarrow \infty} \frac{\rho^{H}_{x,P}(h^{N})}{N}. \] 

From Lemma \ref{lem:quasi}, $\overline{\rho^{H}_{x,P}}$ is a well-defined $\R$-valued quasi morphism of defect one. It is routine to check the following properties.

\begin{lem}
\label{lem:stablemap}
The stable map $\overline{\rho^{H}_{x,P}} : H \rightarrow \R$ has the following properties.
\begin{enumerate}
\item $\overline{\rho^{H}_{x,P}}(a^{-1}ha) = \overline{\rho^{H}_{x,P}}(h)$ holds for all $a,h \in H$.
\item $\overline{\rho^{H}_{x,P}}(h^{M}) = M \overline{\rho^{H}_{x,P}}(h)$ holds for all $h\in H$ and $M \in \Z$.
\item If $\overline{\rho^{H}_{x,P}}(h_{1}h_{2}\cdots h_{k}) = 0$, then $| \overline{\rho^{H}_{x,P}}(h_{1}) + \overline{\rho^{H}_{x,P}}(h_{2}) + \cdots + \overline{\rho^{H}_{x,P}}(h_{k}) | \leq k-1$ holds. 
\end{enumerate}
\end{lem}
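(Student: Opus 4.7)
The plan is to deduce all three statements from the defect-$1$ estimate for $\overline{\rho} := \overline{\rho^{H}_{x,P}}$ together with the following elementary telescoping bound: if $\phi$ is a quasi-morphism of defect $D$, then for any $g_1,\ldots,g_k$,
\[
|\phi(g_1 g_2 \cdots g_k) - \phi(g_1) - \phi(g_2) - \cdots - \phi(g_k)| \leq (k-1)D,
\]
which follows by an immediate induction on $k$. A useful preliminary observation, obtained by applying the defect inequality to $g \cdot g^{-1} = 1$, is that $|\phi(g) + \phi(g^{-1})|$ is uniformly bounded in $g$ by $D + |\phi(1)|$.

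For (1), I would fix $a,h \in H$ and set $g_N = a^{-1} h^N a$. Two applications of the defect inequality for $\rho$ (defect $1$) yield $|\rho(g_N) - \rho(a^{-1}) - \rho(h^N) - \rho(a)| \leq 2$; combined with the preliminary bound on $\rho(a) + \rho(a^{-1})$, there is a constant $C$ independent of $N$ with $|\rho(g_N) - \rho(h^N)| \leq C$. Since $(a^{-1}ha)^N = g_N$, dividing by $N$ and letting $N \to \infty$ yields $\overline{\rho}(a^{-1}ha) = \overline{\rho}(h)$.

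For (2), if $M \geq 1$ one computes directly $\overline{\rho}(h^M) = \lim_N \rho(h^{MN})/N = M \cdot \lim_N \rho(h^{MN})/(MN) = M \overline{\rho}(h)$, the last equality because $\{MN\}$ is a subsequence of $\{N\}$. For $M = -1$, the preliminary observation applied to $g = h^N$ gives $|\rho(h^N) + \rho(h^{-N})|$ uniformly bounded in $N$, hence $\overline{\rho}(h^{-1}) = -\overline{\rho}(h)$; the cases $M = 0$ and $M < -1$ follow by combining these two. For (3), apply the telescoping bound to $\overline{\rho}$ itself (defect $1$) to get $|\overline{\rho}(h_1 \cdots h_k) - \sum_i \overline{\rho}(h_i)| \leq k-1$, and then substitute the hypothesis $\overline{\rho}(h_1 \cdots h_k)=0$. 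No step presents a genuine obstacle: every claim reduces to showing that some error term is $O(1)$ as $N \to \infty$, after which dividing by $N$ kills it. The only mild care needed is to treat the two symmetric cases $x >_P 1$ and $x <_P 1$ appearing in the definition of $\rho_{x,P}^H$ in parallel.
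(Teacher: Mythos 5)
The paper gives no proof of this lemma (it is dismissed as ``routine''), so there is nothing to compare against; your route is the natural one, and your arguments for (1) and (2) are complete and correct (minor remark: $\rho_{x,P}^{H}(1)=0$ by definition, so the constant in your preliminary observation is just $D$).

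Part (3), however, has a genuine gap: your one-line argument rests entirely on the assertion that the homogenization $\overline{\rho^{H}_{x,P}}$ has defect $1$. The paper does assert this just before the lemma, but it does not follow from Lemma \ref{lem:quasi} by the argument you are implicitly invoking: the general bound for homogenization is $D(\overline{\phi}) \leq 2D(\phi)$, so from $D(\rho^{H}_{x,P})\leq 1$ you only get $D(\overline{\rho^{H}_{x,P}})\leq 2$, and your telescoping then yields $2(k-1)$ rather than $k-1$. The purely elementary alternative --- combining $\rho(g)\leq \overline{\rho}(g)\leq \rho(g)+1$ with superadditivity $\rho(hh')\geq\rho(h)+\rho(h')$ --- only gives $\bigl|\sum_i \overline{\rho}(h_i)\bigr|\leq k$. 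In fact the stated bound $k-1$ is essentially equivalent to the defect-one claim for the homogenization, so this is precisely where the content of (3) lives. The fix uses the finer structure that the proof of Lemma \ref{lem:quasi} actually establishes, namely the one-sided estimate $\rho(h)+\rho(h')\leq\rho(hh')\leq\rho(h)+\rho(h')+1$ (for $x>_{P}1$; symmetrically otherwise): the shifted map $\rho+\tfrac{1}{2}$ is then a quasi-morphism of defect $\tfrac{1}{2}$ with the same homogenization, and the doubling bound applied to it gives $D(\overline{\rho^{H}_{x,P}})\leq 1$. With that supplement your proof of (3) closes up. (For what it is worth, the weaker bound $k$ already suffices for the only application of (3) in the paper, Lemma \ref{lem:psivalue}, since there one divides by $N$ and lets $N\to\infty$.)
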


The above construction of quasi morphisms is motivated from the following example.

\begin{exam}
Let $B_{n}$ be the braid group of $n$ strands, $\sigma_{1},\ldots,\sigma_{n-1}$ be the standard generators of $B_{n}$, and $\Delta= (\sigma_{1}\sigma_{2}\cdots\sigma_{n-1})\cdots(\sigma_{1}\sigma_{2})(\sigma_{1})$. $\Delta^{2}$ is a generator of the center of $B_{n}$.
It is known $B_{n}$ is left-orderable and $\Delta^{2}$ is universally co-final \cite{ddrw}.
Let $<_{D}$ be the Dehornoy ordering of $B_{n}$, which is the standard left ordering of $B_{n}$. See \cite{ddrw} for precise definition.
The quasi morphism $\rho^{B_{n}}_{\Delta^{2},<_{D}}$ is called the {\it Dehornoy floor quasi morphism}, and the stable map $\overline{\rho^{B_{n}}_{\Delta^{2},<_{D}}}$ is called {\it the twisting number}, which are defined in \cite{ma}. 
These quasi morphisms are quite useful to study the relationships between topology and orderings \cite{i1},\cite{i2},\cite{ma}.
\end{exam}

Now we are ready to define a map from the space of left-orderings to the bounded cohomology groups.

\begin{defn}
Let us define the map $\psi^{H}_{x}: \InvLO_{x}^{H}(G) \cap \Cof_{x}^{H}(G) \rightarrow H_{b}^{2}(H;\Z)$ by
$\psi^{H}_{x}(P) = [\rho_{x,P}^{H}] $.
\end{defn}

The map $\psi_{x}^{H}$ are natural with respect to the inclusions in the following sense.

Let $K$ and $H$ be subgroups of $G$, and assume that $K$ is also a subgroup of $H$.
Take $x \in H$.
Let $i: H \rightarrow G$ and $j : K \rightarrow H$ be inclusions. Then, by taking the restriction, $i$ induces a continuous map $i^{*}: \LO(G) \rightarrow \LO(H)$, which is explicitly written as $i^{*}(P) = P \cap H$. Clearly this map defines the continuous map $i^{*}: \InvLO_{x}^{H}(G) \cap \Cof_{x}^{H}(G) \rightarrow \InvLO_{x}^{K}(H) \cap \Cof_{x}^{K}(H) $. 
Then by definition, it is easy to see that the following diagram commutes.
\[
\xymatrix{ \InvLO_{x}^{H}(G) \cap \Cof_{x}^{H}(G) \ar[r]^{i^{*}} \ar[d]_{\psi_{x}^{H}} & \InvLO_{x}^{K}(H) \cap \Cof_{x}^{K}(H) \ar[d]^{\psi_{x}^{K}} \\
H_{b}^{2}(H;\Z) \ar[r]^{j^{*}}  & H_{b}^{2}(K;\Z) 
} 
\]

Now let us study the properties of $\psi_{x}^{H}$.
 First we show the map $\psi_{x}$ is non-trivial by determining the image of $\psi_{x}$ for  a group $G$ whose $\R$-coefficient $2$nd bounded cohomology vanishes.

\begin{thm}
\label{thm:surj}
Let $G$ be a finitely generated left-orderable group whose $\R$-coefficient $2$nd bounded cohomology vanishes, 
and $x \in G$ be an element of $G$ whose representing homology class $[x] \in H_{1}(G;\Z)$ has infinite order. 
Then the image of the map $\psi_{x}^{G}: \InvLO_{x}(G) \cap \Cof_{x}(G) \rightarrow H_{b}^{2}(G; \Z)$ is given by
\[ \textrm{Im}\, \psi_{x}^{G} = \{ [\tau] \in H_{b}^{2}(G;\Z) = H^{1}(G;\R) \slash H^{1}(G;\Z) \: | \: \tau([x]) = 1\}\]
\end{thm}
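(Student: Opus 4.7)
By Lemma~\ref{lem:boundhom}(2), the hypothesis $H_{b}^{2}(G;\R)=0$ gives an identification $H_{b}^{2}(G;\Z)\cong H^{1}(G;\R)/H^{1}(G;\Z)$ sending the class of an integer-valued quasi-morphism $\phi$ to its stable map $\overline{\phi}$; under this vanishing hypothesis, $\overline{\phi}$ is an actual $\R$-valued homomorphism, because $\phi$ differs from a homomorphism by a bounded cochain. Thus $\psi_{x}^{G}(P)$ is represented by $\tau:=\overline{\rho_{x,P}^{G}}:G\to\R$. From the defining inequalities, $\rho_{x,P}^{G}(x^{n})=n$ for every $n\in\Z$, and hence $\tau(x)=\lim_{n\to\infty}\rho_{x,P}^{G}(x^{n})/n=1$, which establishes $\mathrm{Im}\,\psi_{x}^{G}\subseteq\{[\tau]\mid\tau([x])=1\}$.

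\textbf{Construction of $P$ from $\tau$.}
For the reverse inclusion, given a homomorphism $\tau:G\to\R$ with $\tau(x)=1$, the plan is to produce $P\in\InvLO_{x}(G)\cap\Cof_{x}(G)$ with $\overline{\rho_{x,P}^{G}}=\tau$, which yields $\psi_{x}^{G}(P)=[\tau]$. The construction is lexicographic: once one has a positive cone $Q$ on the normal subgroup $\ker\tau$ that is invariant under conjugation by $x$, set
\[
P=\{g\in G\mid\tau(g)>0\}\,\cup\,\{g\in\ker\tau\setminus\{1\}\mid g\in Q\}.
\]
Granting such a $Q$, four routine verifications complete the proof: (i) $P$ is a positive cone (LO1 uses that $\tau$ is a homomorphism and $Q$ is multiplicatively closed, LO2 is immediate); (ii) $P$ is invariant under conjugation by $x$ because $\tau\circ\mathrm{ad}(x)=\tau$ and $Q$ is $x$-invariant, so $P\in\InvLO_{x}(G)$; (iii) $x$ is cofinal to $G$, since $\tau(x^{N})=N$ and so any $g$ with $|\tau(g)|<N$ satisfies $x^{-N}<_{P}g<_{P}x^{N}$; (iv) the integer $\rho_{x,P}^{G}(g)$ agrees with $\lfloor\tau(g)\rfloor$ up to an error of at most one, whence $\overline{\rho_{x,P}^{G}}=\tau$ by stabilization.

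\textbf{The main obstacle.}
The crux of the argument is the existence of the $x$-conjugation-invariant positive cone $Q$ on $\ker\tau$. Since $\ker\tau$ is a subgroup of the left-orderable group $G$, the space $\LO(\ker\tau)$ is nonempty and compact, and conjugation by $x$ defines a continuous action of $\Z=\langle x\rangle$ on it. Because $\Z$ is amenable, one expects a fixed point of this action; proving its existence requires a Markov--Kakutani type argument on $\LO(\ker\tau)$, combined with the specific combinatorial structure of positive cones needed to convert an invariant probability measure on $\LO(\ker\tau)$ into a genuinely invariant ordering. This invariant-ordering existence result is the genuinely delicate ingredient; the rest of the argument, as outlined above, is essentially bookkeeping.
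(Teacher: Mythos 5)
Your forward inclusion is essentially correct (apart from a sign subtlety: when $x<_{P}1$ the definition of $\rho_{x,P}^{G}$ gives $\overline{\rho_{x,P}^{G}}(x)=-1$, not $1$; the paper is silent on this as well). The reverse inclusion, however, is incomplete at exactly the point you yourself flag as the crux. You need a positive cone $Q$ on $\ker\tau$ invariant under conjugation by $x$, and the route you sketch cannot supply it: amenability of $\Z$ yields an invariant probability measure on the compact space $\LO(\ker\tau)$, not a fixed point, and there is no general mechanism for converting an invariant measure on a space of orderings into an invariant ordering. Worse, the required $Q$ need not exist at all. Take the Klein bottle group $G=\langle a,b \mid bab^{-1}=a^{-1}\rangle$, which is finitely generated, torsion-free polycyclic (hence left-orderable and with $H_{b}^{2}(G;\R)=0$), with $[b]$ of infinite order in $H_{1}(G;\Z)$. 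For $x=b$ and $\tau(b)=1$, $\tau(a)=0$ one gets $\ker\tau=\langle a\rangle\cong\Z$, whose only two left orderings are interchanged by conjugation by $b$; no invariant $Q$ exists, so your construction cannot even get started and the subsequent ``bookkeeping'' never happens.

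For comparison, the paper proceeds by a different decomposition: it first treats finitely generated free abelian groups by an explicit construction (order the lattice via orthogonal projection onto the line determined by $\tau$, refined lexicographically by an ordering of the kernel of that projection), and then pulls the resulting ordering of the torsion-free abelianization $A=H_{1}(G;\Z)-\textrm{Tors}\,H_{1}(G;\Z)$ back along $p:G\rightarrow A$, extending lexicographically by an \emph{arbitrary} left ordering of $\ker p$. This sidesteps the need to order $\ker\tau$ invariantly on the quotient level, since conjugation acts trivially on $A$; but the paper likewise never verifies that its lexicographic extension lies in $\InvLO_{x}(G)$, which would require the chosen ordering of $\ker p$ to be invariant under conjugation by $x$ --- precisely the obstacle you identified, and the Klein bottle example shows it is a genuine one (there $\InvLO_{b}(G)$ is empty while the claimed image is not). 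So you have correctly located a real subtlety that the paper glosses over, but your proof has a genuine gap there, and the amenability/fixed-point strategy you propose to close it does not work.
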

\begin{proof}

First we show the theorem for the free abelian group, $G=\Z^{m}$ case.

Let $\{ a_{0}, a_{1},\ldots,a_{m-1} \}$ be a free generator of $\Z^{m}$. The assumption that $[x]$ is infinite order implies that $x$ is a non-trivial element. With no loss of generality,  we can choose $a_{0} = x^{k}$ where $k$ is a positive integer. 

Let $\tau: \Z^{m} \rightarrow \R$ be the homomorphism such that $\tau(x)=1$, and put 
$r_{i} = \tau(a_{i})$ for $i=0,\ldots,m-1$. 
By re-numbering of $\{a_{i}\}$, we may assume that $r_{0},r_{1},\ldots, r_{p} \in \R - \{0\}$, $r_{p+1}= \cdots = r_{m-1} = 0$.
Let $A'$, $A_{0}$ be the subgroup of $A$ generated by $\{a_{0},\ldots,a_{p}\}$, $\{a_{p+1},\ldots,a_{m-1}\}$ respectively.

We define the ordering $P$ of $A'$ as follows.  
Let $L_{\tau} = \{ (t, kr_{1}t , \ldots , kr_{p}t) \in \R^{p+1} \: | \: t \in \R\}$ be the line in $\R^{p+1}$, and $p_{L}: \R^{m} \rightarrow L$ be the orthogonal projection. 
On the points of the line $L_{\tau}$, we define the ordering $<_{L}$ by $(t,\ldots) <_{L} (t',\ldots)$ if $t<_{L} t'$.
We regard $A' = \Z^{p+1}$ as the integer lattice of $\R^{m+1}$.
First we take a left ordering $<'$ of $\textrm{Ker}\,(p_{L}) \cap A' $.
For $a,b \in \Z^{p+1}$, we define $a <_{P} b$ if $p_{L}(a) <_{L} p_{L}(b) $, or $p_{L}(a)=p_{L}(b)$ and $1<' a^{-1}b$. Then $P$ is a left ordering of $\Z^{p+1}$ and $\overline{\rho_{x, P}^{A'}}(a_{i}) = r_{i}$ holds for all $i=0,\ldots,p$.

Next we extend the ordering $P$.
Let $p: A \rightarrow A'$ be the projection, and $Q$ be a left ordering of $A_{0}$. 
We define the ordering $R$ of $A$ by
$a < a'$ if $p(a) <_{P} p(a')$ or, $p(a)=p(a')$ and $a <_{Q} a'$.
From the construction, the restriction of $R$ to $A'$ coincide with the ordering $P$,  and $\overline{\rho_{x, P}^{A}}(A_{0}) = 0$. Hence $\psi_{x}^{A}(R) = [\tau]$, so we complete the proof of theorem for the free abelian groups.

Now we show the general case.
Let $A = H_{1}(G;\Z) - \textrm{Tors} H_{1}(G; \Z)$ be the torsion-free part of the $1$st homology group, and $p :G \rightarrow A$ be the projection. Since $H^{1}(G;\Z) = \textrm{Hom} (A ,\Z)$, the map $p$ induces the isomorphism of the $2$nd bounded cohomologies.
Let $[\widetilde{\tau}]$ be a $2$nd bounded cohomology class of $G$, represented by homomorphism $\widetilde{\tau}: G \rightarrow \R$ such that $\widetilde{\tau}(x)=1$. Then one can find a homomorphism $\tau: A \rightarrow \R$ such that $p^{*}([\tau]) = [\widetilde{\tau}]$.
We construct left-ordering $P$ of $G$ as follows.

First observe that $\tau(x)=1$ and $[x] = p(x)$ is non-trivial element in $H_{1}(G;\Z)$. Thus from the abelian case we have just proved, we can find a left ordering $<_{A}$ of $A$ which represents the bounded cohomology class $[\tau]$.
Now let us consider the exact sequence $1 \rightarrow \textrm{Ker}\, p  \rightarrow G \stackrel{p}{\rightarrow} A \rightarrow 1$. Since $G$ is left orderable, so is $\textrm{Ker}\, p$. Let $<'$ be a left ordering of $\textrm{Ker}\, p$. We define a left ordering $P$ of $G$ by defining $g<_{P} g'$ if $p(g) <_{A} p(g')$, or $p(g)=p(g')$ and $1 <' g^{-1}g'$.
Then $\psi_{x}^{G}(P) = p^{*}([\tau]) = [\widetilde{\tau}]$ holds.
\end{proof}

Next we provide an explicit description of the map $\psi_{x}^{H}$ for a subgroup $H$ whose $\R$ coefficient 2nd bounded cohomology vanishes.

\begin{lem}
\label{lem:explicitform}
Assume that $H_{b}^{2}(H;\R) = 0$.
Let $y_{1},\ldots,y_{b}$ be elements of $G$ which form the basis of $H_{1}(H;\Z) - \textrm{Tors}\, H_{1}(H;\Z) $. Then, for $P \in \InvLO_{x}^{H}(G) \cap \Cof_{x}^{H}(G)$,
\[ \psi_{x}^{H}(P) =  \left[ (\overline{\rho_{x,P}^{H}}(y_{1}), \ldots,\overline{\rho_{x,P}^{H}}(y_{b}) \,) \right] \]
holds.
\end{lem}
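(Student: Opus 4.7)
The plan is to unpack the isomorphism $H_b^2(H;\Z) \cong H^1(H;\R)/H^1(H;\Z)$ from Lemma \ref{lem:boundhom} (2) and identify the specific homomorphism whose class represents $\psi_x^H(P)$. Since the free part of $H_1(H;\Z)$ has basis $y_1,\ldots,y_b$, a homomorphism $H \to \R$ is determined by its values on the $y_i$, and $\Hom(H,\Z)$ sits inside $\Hom(H,\R)$ as the integer-valued homomorphisms. Thus $H^1(H;\R)/H^1(H;\Z) \cong \R^b/\Z^b$ via evaluation on $y_1,\ldots,y_b$, and the lemma reduces to showing that the homomorphism associated to $\psi_x^H(P)$ takes value $\overline{\rho_{x,P}^H}(y_i)$ on $y_i$.

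Next, I would trace through the construction in the proof of Lemma \ref{lem:boundhom} (2). By Lemma \ref{lem:quasi}, $\phi := \rho_{x,P}^H$ is a $\Z$-valued quasi morphism, and $c := d\phi$ is a bounded $2$-cocycle representing $\psi_x^H(P) \in H_b^2(H;\Z)$. Since $H_b^2(H;\R)=0$, the class of $c$ vanishes in $\R$-coefficients, so there exists a bounded $\R$-valued $1$-cochain $\phi_\R$ with $d\phi_\R = c$. The recipe from Lemma \ref{lem:boundhom} then produces the homomorphism
\[ \tau_P := \phi_\R - \phi : H \longrightarrow \R, \]
and by construction $[\tau_P] \in H^1(H;\R)/H^1(H;\Z)$ corresponds to $\psi_x^H(P)$ under the isomorphism.

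The key computation is then to evaluate $\tau_P(y_i)$. Because $\phi_\R$ is bounded, say $\|\phi_\R\|_\infty \le C$, we have $|\tau_P(h) - \rho_{x,P}^H(h)| \le C$ for every $h \in H$. Applying this to $h = y_i^N$ and using that $\tau_P$ is a homomorphism,
\[ \left| \tau_P(y_i) - \frac{\rho_{x,P}^H(y_i^N)}{N} \right| \le \frac{C}{N}. \]
Letting $N \to \infty$ and using the definition of $\overline{\rho_{x,P}^H}$ yields $\tau_P(y_i) = \overline{\rho_{x,P}^H}(y_i)$, which is exactly the claim after identifying $H_b^2(H;\Z)$ with $\R^b/\Z^b$ via the basis $y_1,\ldots,y_b$.

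The only delicate point is verifying that the identifications are consistent: that the class $[\tau_P] \in H^1(H;\R)/H^1(H;\Z)$ does map to the tuple $(\tau_P(y_1),\ldots,\tau_P(y_b)) \bmod \Z^b$, which requires noting that a homomorphism $H \to \Z$ is uniquely determined by its values on a basis of the free part of $H_1(H;\Z)$ (torsion elements are automatically killed). Everything else is essentially formal once $\tau_P$ has been identified as the natural homogenization of $\rho_{x,P}^H$.
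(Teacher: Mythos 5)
Your proposal is correct in substance and rests on the same mechanism as the paper's proof: under the identification $H_b^2(H;\Z)\cong H^1(H;\R)/H^1(H;\Z)$, the class of a quasi morphism is read off from its stable values on a basis of the free part of $H_1(H;\Z)$. The packaging differs slightly. The paper first passes to the projection $p\colon H\to A$ onto the torsion-free abelianization, quotes the explicit isomorphism $\Theta([\phi])=[(\overline{\phi}(a_1),\ldots,\overline{\phi}(a_m))]$ for $\Z^m$, and then uses that cohomologous quasi morphisms differ by an almost homomorphism to transfer the computation back to $\rho_{x,P}^H$. You instead apply the construction of Lemma \ref{lem:boundhom}(2) directly on $H$ and evaluate the resulting homomorphism by the stable limit. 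Your route is a bit more direct and avoids the detour through $A$, at the cost of having to justify the identification $H^1(H;\R)/H^1(H;\Z)\cong\R^b/\Z^b$ by evaluation on $y_1,\ldots,y_b$, which you do.

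One bookkeeping point should be fixed. With $\tau_P:=\phi_\R-\phi$ and $\|\phi_\R\|_\infty\le C$ you obtain $|\tau_P(h)+\rho_{x,P}^H(h)|\le C$, not $|\tau_P(h)-\rho_{x,P}^H(h)|\le C$, so the limit actually gives $\tau_P(y_i)=-\overline{\rho_{x,P}^H}(y_i)$. This is only a sign-convention issue: the two directions of the correspondence in Lemma \ref{lem:boundhom}(2), namely $[c]\mapsto\tau_c=\phi_\R-\phi$ and $\tau\mapsto c_\tau=d[\tau]$, are mutually inverse only up to sign, and the identification $\Theta$ that the paper uses agrees with the second one (for which your stated formula is correct). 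But as written, your middle inequality does not follow from your own definition of $\tau_P$; either replace $\tau_P$ by $\phi-\phi_\R$, or carry the minus sign through and note that the identification you are using is the negative of the one via stable values. With that repaired, the argument is complete.
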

\begin{proof}

Recall that for a torsion-free abelian group $\Z^{m} = \langle a_{1},\ldots, a_{m}\rangle$, the isomorphism
\[ \Theta : \textrm{QMor}( \Z^{m} ; \Z) \slash \textrm{AHom}(\Z^{m} ; \Z)  = H_{b}^{2} (\Z^{m} ; \Z) \rightarrow H^{1}(\Z^{m} ;\R) \slash H^{1}(\Z^{m} ;\Z) = \R^{m} \slash \Z^{m} \]
 is given by $\Theta( [\phi] ) = \left[ (\overline{\phi}(a_{1}), \ldots,  \overline{\phi}(a_{m})) \right] $. Here $\overline{\phi} : \Z^{m} \rightarrow \R$ is a stable map of a quasi morphism $\phi$, defined by $\overline{\phi}(a) = \lim_{N \rightarrow \infty} \phi (Na) \slash N$.

Now as in the proof of Theorem \ref{thm:surj}, let $A$ be the torsion-free part of the $1$st homology group of $H$ and $ p: H \rightarrow A$ be the projection.
Since $p^{*}$ induces an isomorphism of bounded cohomology, we can find a quasi morphism $\phi: \Z^{m} \rightarrow \Z$ such that $p^{*}\phi = \phi \circ p$ represents the bounded cohomology class $[\psi_{x}^{H}(P)]$. This implies that the difference $\rho_{x,P}^{H} - \phi\circ p$ is an almost homomorphism.
Thus, there exists a homomorphism $\tau: H \rightarrow \Z$ and constant $C>0$ such that 
\[ \left| \rho_{x,P}^{H}(h) - \phi \circ p(h) - \tau (h) \right| \leq  C \]
holds for all $h \in H$.
Thus 
\[ \left| \frac{ \rho_{x,P}^{H}(h^{N})}{N} - \frac{\phi \circ p(h^{N})}{N} - \tau (h) \right| \leq  \frac{C}{N} \]
holds for all $N>0$ and $h \in H$.
Therefore, the stable maps $\overline{\rho_{x,P}^{H}}$ and $\overline{\phi \circ p}$ coincide modulo $\Z$.

Hence we conclude that 
\[ \psi_{x}^{H}(P) =  \left[ (\overline{\rho_{x,P}^{H}}(y_{1}), \ldots,\overline{\rho_{x,P}^{H}}(y_{b}) ) \right] \]
holds.
\end{proof}

Based on this description, we extend the maps $\psi_{x}^{H}$ for the whole of $\InvLO_{x}^{H}(G)$ so that it contains more information.

\begin{defn}
Let $H$ be a finitely generated subgroup of $G$ whose $\R$-coefficient 2nd bounded cohomology vanishes and $x \in G$. Let $y_{1},\ldots,y_{b}$ be elements of $G$ which form the basis of $H_{1}(H;\Z) - \textrm{Tors}\, H_{1}(H;\Z) $ where $b= b_{1}(H)$.
We define a map $\widetilde{\psi_{x}^{H}} : \InvLO_{x}^{H}(G) \rightarrow S^{b} = \R^{b} \cup \{\infty \}$ 
by
\[
 \widetilde{\psi_{x}^{H}}(P) = \left\{
\begin{array}{ll}
(\overline{\rho_{x,P}^{H}}(y_{1}), \ldots,\overline{\rho_{x,P}^{H}}(y_{b}) )  & (P \in \InvLO_{x}^{H}(G) \cap \Cof_{x}^{H}(G)) \\
\infty & (P \not \in \InvLO_{x}^{H} )
\end{array}
\right.
\]
\end{defn}

The restriction of $\widetilde{\psi_{x}^{H}}$ to $\InvLO_{x}^{H}(G) \cap \Cof_{x}^{H}(G))$ is nothing but the lift of the map  $\psi_{x}^{H} :  \InvLO_{x}^{H}(G) \cap \Cof_{x}^{H}(G)  \rightarrow H_{b}^{2}(H;\Z) = (S^{1})^{b}$.
We mainly use the following special case, which corresponds to the case $H =\Z = \langle y \rangle$.

\begin{defn}
For $y \in H$, let us define the map
\[ \psi^{H}_{x,y}: \InvLO_{x}^{H}(G) \cap \Cof_{x}^{H}(G) \rightarrow \R\slash\Z\]
by $\psi^{H}_{x,y} = \iota^{*}_{y}\circ\psi^{H}_{x}$, where $\iota^{*}_{y} : H^{2}_{b}(H;\Z) \rightarrow H^{2}_{b}(\Z;\Z) \cong \R \slash \Z$ is a map induced by the inclusion $\iota_{y}: \langle y \rangle \hookrightarrow H$. This map is explicitly written by
$\psi^{H}_{x,y}(P) = [\overline{\rho^{H}_{x,P}}(y) ]$.
Finally, we define the map $\widetilde{\psi^{H}_{x,y}}: \InvLO_{x}^{H}(G) \rightarrow S^{1} = \R \cup \{\infty\}$ by 
\[ \widetilde{\psi^{H}_{x,y}}(P) =\left\{
\begin{array}{l}
 \overline{\rho_{x,P}^{H}}(y) \;\;\;\;\; (P \in \Cof_{x}^{H}(G) )\\
 \infty \;\;\;\;\;\;\;\;\;\;\;\;\;( P  \not \in \Cof_{x}^{H}(G))
 \end{array} \right.
\]
This map is an extension of the lift of the map $\psi^{H}_{x,y}$.
\end{defn}

The useful fact is that the maps defined above are continuous.

\begin{prop}
\label{prop:continuous}
Both 
$\psi^{H}_{x,y}$ and $\widetilde{\psi^{H}_{x,y}}$ are continuous.
\end{prop}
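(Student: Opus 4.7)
The plan is to establish continuity of $\widetilde{\psi^{H}_{x,y}}$; continuity of $\psi^{H}_{x,y}$ then follows because it is the composition of $\widetilde{\psi^{H}_{x,y}}$, restricted to the open subset $\InvLO^{H}_{x}(G) \cap \Cof^{H}_{x}(G)$ (Lemma \ref{lem:subset}) on which it takes finite values, with the continuous quotient $\R \to \R/\Z$. The essential tool is the defect-$1$ estimate: iterating Lemma \ref{lem:quasi} gives
\[ \left| \, \overline{\rho^{H}_{x,P}}(y) - \tfrac{1}{N}\rho^{H}_{x,P}(y^{N}) \, \right| \leq \tfrac{1}{N} \]
for every $N \geq 1$ at which $\rho^{H}_{x,P}(y^{N})$ is defined. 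Crucially, for $x >_{P} 1$ the condition $\rho^{H}_{x,P}(y^{N}) = M$ is equivalent to the two comparisons $x^{M} \leq_{P} y^{N} <_{P} x^{M+1}$, which, since $G$ is torsion-free so each inequality is either automatic or strict, cut out a basic open subset of $\InvLO^{H}_{x}(G)$.

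For continuity at a point $P$ with $\widetilde{\psi^{H}_{x,y}}(P) = r \in \R$, assume $x >_{P} 1$ (the other sign is symmetric). Given $\epsilon > 0$, choose $N > 2/\epsilon$ and set $M = \rho^{H}_{x,P}(y^{N})$. The open set
\[ V = \{ Q \in \InvLO^{H}_{x}(G) \mid x >_{Q} 1,\ x^{M} \leq_{Q} y^{N} <_{Q} x^{M+1} \} \]
is a neighborhood of $P$; for every $Q \in V$ one has $\rho^{H}_{x,Q}(y^{N}) = M$, and the defect-$1$ estimate applied at both $P$ and $Q$ gives $|\overline{\rho^{H}_{x,Q}}(y) - r| \leq 2/N < \epsilon$. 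One must additionally verify that $\overline{\rho^{H}_{x,Q}}(y)$ is actually a finite real on $V$, i.e.\ that $x$ is co-final to $y$ with respect to $Q$: if $y \geq_{Q} x^{K}$ for every $K \geq 0$, then combining left-invariance with the right-$x$-invariance built into $\InvLO^{H}_{x}(G)$ iterates to $y^{N} \geq_{Q} x^{NK}$ for any $K$, contradicting $y^{N} <_{Q} x^{M+1}$ once $K$ is large; the symmetric possibility $y \leq_{Q} x^{-K}$ for every $K$ is ruled out by the lower bound.

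The main obstacle is continuity at a point $P$ with $\widetilde{\psi^{H}_{x,y}}(P) = \infty$. Assume $x >_{P} 1$ and $y >_{P} x^{K}$ for every $K \geq 0$ (the remaining sub-cases are symmetric). Given $R > 0$, set $K_{0} = \lceil R \rceil + 1$ and consider the open neighborhood
\[ V_{R} = \{ Q \in \InvLO^{H}_{x}(G) \mid x >_{Q} 1,\ y >_{Q} x^{K_{0}} \}. \]
For any $Q \in V_{R}$ the same iteration gives $y^{n} >_{Q} x^{nK_{0}}$ for every $n \geq 1$, so whenever $\overline{\rho^{H}_{x,Q}}(y)$ is finite it is at least $K_{0} > R$, and otherwise $\widetilde{\psi^{H}_{x,y}}(Q) = \infty$. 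Hence $\widetilde{\psi^{H}_{x,y}}(V_{R}) \subset (R, \infty) \cup \{\infty\}$, which is contained in the basic open neighborhood $(\R \setminus [-R, R]) \cup \{\infty\}$ of $\infty$ in $S^{1}$, completing the proof.
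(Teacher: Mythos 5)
Your argument is essentially the paper's own proof: both reduce to $\widetilde{\psi^{H}_{x,y}}$, pin down $\rho^{H}_{x,\cdot}(y^{N})$ on a basic open neighbourhood and use the defect-$1$ estimate (your explicit bound $|\overline{\rho^{H}_{x,P}}(y)-\rho^{H}_{x,P}(y^{N})/N|\leq 1/N$ is just the quantitative form of the paper's $N_{\varepsilon}$) to get $\varepsilon$-control at finite values, and describe neighbourhoods of $\infty$ by conditions of the form $y>_{Q}x^{K}$. The only caveat, which applies equally to the paper's own proof, is that by the stated definition $\widetilde{\psi^{H}_{x,y}}(Q)=\infty$ exactly when $Q\notin\Cof_{x}^{H}(G)$, i.e.\ when $x$ fails to be cofinal to \emph{some} element of $H$, not merely to $y$; so unless $H=\langle y\rangle$ (the case the paper says it mainly uses) your neighbourhood $V$ must additionally be intersected with the open set $\InvLO_{x}^{H}(G)\cap\Cof_{x}^{H}(G)$ of Lemma \ref{lem:subset}, and the sub-case where $\widetilde{\psi^{H}_{x,y}}(P)=\infty$ although $x$ \emph{is} cofinal to $y$ is addressed by neither your argument nor the paper's.
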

\begin{proof}
Since $\widetilde{\psi^{H}_{x,y}}$ is an extension of the lift of $\psi^{H}_{x,y}$, it is sufficient to show that $\widetilde{\psi^{H}_{x,y}}$ is continuous. 
Let $A = \InvLO_{x}^{H}(G) \cap \Cof_{x}^{H}(G)$. First we show that $\widetilde{\psi^{H}_{x,y}}$ is continuous on $A$.
Since $\rho_{x,P}^{H}$ is a quasi morphism of defect $1$, for every $\varepsilon > 0$, there exists an integer $N_{\varepsilon}$ such that 
\[ \left| \, \frac{\rho_{x,P}^{H}(y^{N})}{N} - \widetilde{\psi^{H}_{x,y}}(P) \right| < \varepsilon \, \]
holds for all $N > N_{\varepsilon}$ and $P \in \InvLO_{x}^{H}(G) \cap \Cof_{x}^{H}(G)$.

We show that $ \widetilde{\psi^{H}_{x,y}} {}^{-1}((a-\varepsilon, a+ \varepsilon))$ is open for all $a \in \R$ and $\varepsilon >0$.
Let $P \in \widetilde{\psi^{H}_{x,y}} {}^{-1}((a-\varepsilon, a+ \varepsilon))$ and $a^{*} = \widetilde{\psi^{H}_{x,y}}(P)$.
Take $\varepsilon^{*} >0$ so that $(a^{*}-\varepsilon^{*}, a + \varepsilon^{*}) \subset (a-\varepsilon, a+ \varepsilon)$. Let $N= N_{\varepsilon^{*} \slash 2}$ and $\tau = \rho_{x,P}^{H}(y^{N})$.

Let $U=\{ Q \in \InvLO_{x}^{H}(G) \cap \Cof_{x}^{H}(G) \: | \: \rho_{x,Q}^{H}(y^{N}) = \tau\}$.
$U$ is written as
\[ U = ( \mU_{x} \cap \mU_{x^{-\tau} y^{N}} \cap \mU_{y^{-N}x^{\tau}}) \cup  ( \mU_{x^{-1}} \cap \mU_{x^{\tau} y^{N}} \cap \mU_{y^{-N}x^{-\tau+1}}) \]
by using the open basis $\{\mU_{g}\}_{g \in G}$, thus $U$ is open.
Now for $Q \in U$, 
\[ \left| \, \widetilde{\psi^{H}_{x,y}}(Q) - \widetilde{\psi^{H}_{x,y}}(P) \,\right|
\leq \left| \, \widetilde{\psi^{H}_{x,y}}(Q) - \frac{\rho_{x,Q}^{H}(y^{N})}{N} \, \right| + \left| \,\frac{\rho_{x,P}^{H}(y^{N})}{N}- \widetilde{\psi^{H}_{x,y}}(P) \,\right| < \varepsilon^{*}\]
so $ \widetilde{\psi^{H}_{x,y}}(U) \subset (a^{*} - \varepsilon^{*}, a^{*} + \varepsilon^{*})$.
Thus, $U$ is an open neighborhood of $P$ in $\widetilde{\psi^{H}_{x,y}}{}^{-1}(a - \varepsilon, a + \varepsilon)$, so $\widetilde{\psi^{H}_{x,y}}{}^{-1}(a - \varepsilon, a + \varepsilon)$ is open. Thus, we conclude that $\widetilde{\psi_{x,y}^{H}}|_{A}$ is continuous.

Finally, we show that $\widetilde{\psi_{x,y}^{H}}$ is continuous at a point in $ \InvLO_{x}^{H}(G) -(\InvLO_{x}^{H}(G) \cap \Cof_{x}^{H}(G))$. Let $U_{N} = \{ a \in \R \cup \{\infty \} \: | \: |a| > N  \}$.
Then,
\[ \widetilde{\psi_{x,y}^{H}} {}^{-1} (U_{N}) = \left \{ \mU'_{x} \cap ( \mU'_{x^{-N}y} \cup \mU'_{y^{-1}x^{-N}}) \right\} \cup  \left \{ \mU'_{x^{-1}} \cap ( \mU'_{x^{N}y} \cup \mU'_{y^{-1}x^{N}}) \right\}  \]
 where $\mU'_{g} = \mU_{g} \cap \InvLO_{x}^{H}(G)$.
 Thus, $\widetilde{\psi_{x,y}^{H}} {}^{-1}(U_{N})$ is open, so we conclude that  $\widetilde{\psi_{x,y}^{H}}$ is continuous.

\end{proof}

\begin{exam}
\label{exam:Sikora1}
The above abstract constructions of the maps from $\LO(G)$ to $S^{1}$ are motivated to understand the topology of $\LO(G)$, and it is a generalization of the map $f:\LO(\Z^{2}) \rightarrow S^{1}$ constructed in \cite{s}, which we describe here.

First we review a description of $\LO(\Z^{2})$ given by Sikora \cite{s}.
Let $a,b$ be the free generator of $\Z^{2}$.
Let $\R_{[)}$ be the real line with the topology having a basis of the form $[a,b)$, and let $\R_{(]}$ be the real line with the topology having a basis of the form $(a,b]$. Let $S^{1}_{[)} = \R_{[)} \slash \Z$ and $S^{1}_{(]} = \R_{(]} \slash \Z$, and regard them as a subset of $\R^{2}$. A point $(p,q) \in S^{1}$ is called {\it rational} if $p\slash q \in \Q$ or $q=0$.  
Let $X$ be the topological space obtained by identifying the irrational points of $S^{1}_{(]}$ and $S^{1}_{[)}$. Sikora showed that $X$ is homeomorphic to $\LO(\Z^{2})$. This description is useful, because such coordinates reflects the properties of the ordering, as we will see Example \ref{exam:Sikora2}.

Now let us define the continuous map $f: X=\LO(\Z^{2}) \rightarrow S^{1}$ by sending a point of $x \in S^{1}_{[)}$ or $S^{1}_{(]}$ to the corresponding point of $S^{1}$. 
The relationship between $\widetilde{\psi_{a,b}}$ and $f$ is as follows.
Let us take a double cover $\pi_{2}: S^{1} \rightarrow S^{1} = \R \cup {\infty}$, which sends $(p,q) \mapsto q\slash p$ (Here we regard $\pm 1 \slash 0$ as $\infty$).
Then $\pi_{2} \circ f = \widetilde{\psi_{a,b}}$ holds.
\end{exam}

Using almost the same arguments, we can prove the following generalized version of the proposition. 

\begin{thm}
\label{thm:continuous}
If $H$ is a finitely generated subgroup of $G$, whose $\R$-coefficient 2nd bounded cohomology vanishes, then 
$\psi_{x}^{H}: \InvLO_{x}^{H}(G) \cap \Cof_{x}^{H}(G) \rightarrow H_{b}^{2}(H;\Z) = (S^{1})^{b}$ and $\widetilde{\psi_{x}^{H}}: \InvLO_{x}^{H}(G) \rightarrow S^{b}$ are continuous.
\end{thm}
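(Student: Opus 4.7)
The plan is to first establish continuity of the extended map $\widetilde{\psi_x^H}:\InvLO_x^H(G)\to S^b$; continuity of $\psi_x^H$ then follows, since it is the composition of $\widetilde{\psi_x^H}$ restricted to $\InvLO_x^H(G)\cap\Cof_x^H(G)$ with the quotient $\R^b \to (S^1)^b$.

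On the open subspace $\InvLO_x^H(G)\cap\Cof_x^H(G)$ I would run the proof of Proposition \ref{prop:continuous} in parallel for each of the $b$ coordinates. By Lemma \ref{lem:explicitform} the $i$-th coordinate of $\widetilde{\psi_x^H}$ is $P\mapsto\overline{\rho_{x,P}^H}(y_i)$, and since $\rho_{x,P}^H$ has defect $1$ this is approximated to within $1/N$ by the $\Z$-valued, locally constant function $P\mapsto \rho_{x,P}^H(y_i^N)/N$, whose level sets are described by the basic open sets $\mU_g$ exactly as in the proof of Proposition \ref{prop:continuous}. Intersecting the resulting neighborhoods over $i=1,\ldots,b$ yields a single open neighborhood of $P$ on which $\widetilde{\psi_x^H}$ is uniformly close to $\widetilde{\psi_x^H}(P)$ in every coordinate.

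The heart of the argument is continuity at a point $P\in\InvLO_x^H(G)\setminus\Cof_x^H(G)$, where $\widetilde{\psi_x^H}(P)=\infty$. The crucial input is the hypothesis $H_b^2(H;\R)=0$: by Lemma \ref{lem:boundhom}, every $\R$-valued quasi morphism on $H$ is an almost homomorphism, hence the homogeneous map $\overline{\rho_{x,Q}^H}$ is a genuine homomorphism $H\to\R$ for every $Q\in\InvLO_x^H(G)\cap\Cof_x^H(G)$. Fixing generators $h_1,\ldots,h_n$ of $H$ and writing $[h_j]=\sum_i a_{ij}[y_i]$ in $H_1(H;\Z)/\mathrm{Tors}$, this gives the uniform estimate
\[ \overline{\rho_{x,Q}^H}(h_j) \;=\; \sum_{i=1}^{b} a_{ij}\,\overline{\rho_{x,Q}^H}(y_i), \]
so $|\overline{\rho_{x,Q}^H}(h_j)|\leq C\,\|\widetilde{\psi_x^H}(Q)\|_{\infty}$ for a constant $C$ depending only on the fixed $y_i$'s and $h_j$'s. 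Now since $P\notin\Cof_x^H(G)$ and $H$ is finitely generated, some $h_j$ fails to be bounded by powers of $x$ in $P$; assuming $x>_P 1$ without loss of generality, $h_j>_P x^M$ for every $M$. For each $M$ the set $U_M = \mU_x \cap \mU_{x^{-M}h_j} \cap \InvLO_x^H(G)$ is an open neighborhood of $P$, and for $Q\in U_M\cap\Cof_x^H(G)$ one has $\rho_{x,Q}^H(h_j)\geq M$, hence $\overline{\rho_{x,Q}^H}(h_j)\geq M-1$ by the defect bound; choosing $M>CN+1$ then forces $\|\widetilde{\psi_x^H}(Q)\|_{\infty}>N$, as required. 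The case $x<_P 1$ is symmetric. The principal obstacle is precisely this final passage---from the qualitative failure of cofinality at a single generator $h_j$ to a quantitative blow-up in the $y_i$-coordinates---and it is the linear estimate above, crucially dependent on $H_b^2(H;\R)=0$, that makes it possible.
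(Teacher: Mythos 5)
Your proof is correct. On the cofinal locus it runs the argument of Proposition \ref{prop:continuous} coordinate-by-coordinate in the basis $y_1,\dots,y_b$ supplied by Lemma \ref{lem:explicitform}, which is what the paper intends. Where you genuinely diverge is at points of $\InvLO_x^H(G)\setminus\Cof_x^H(G)$. The paper disposes of the whole theorem in two lines: it reduces to the case where $H$ is free abelian (so that the generators of $H$ coincide with the homology basis, and a failure of cofinality is visible directly in one coordinate) and then invokes the last paragraph of the proof of Proposition \ref{prop:continuous}. You work with $H$ itself, and must therefore bridge the gap between an arbitrary generating set $h_1,\dots,h_n$ (at one of which cofinality fails, by the observation in the proof of Lemma \ref{lem:subset}) and the classes $y_1,\dots,y_b$ that the coordinates of $\widetilde{\psi_x^H}$ actually detect. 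Your bridge --- that $H_b^2(H;\R)=0$ forces $\overline{\rho_{x,Q}^H}$ to be an honest homomorphism, hence to factor through $H_1(H;\Z)/\mathrm{Tors}$ and satisfy the uniform estimate $|\overline{\rho_{x,Q}^H}(h_j)|\le C\,\|\widetilde{\psi_x^H}(Q)\|_\infty$ with $C$ independent of $Q$ --- is exactly the right use of the hypothesis, and it buys something the paper's sketch glosses over: a priori cofinality could fail at a generator whose homology class is not one of the $y_i$, and the single-variable preimage formula in Proposition \ref{prop:continuous} does not by itself cover that situation. Two small points worth making explicit: (i) if the witness satisfies $h_j<_P x^{-M}$ for all $M$ rather than $h_j>_P x^M$, you need the right-$x$-invariance of $P$ (i.e.\ $P\in\InvLO_x^H(G)$) to replace $h_j$ by $h_j^{-1}$ and reduce to the case you treat; (ii) if $[h_j]$ is torsion in $H_1(H;\Z)$ then $C=0$, and your inequality instead shows $U_M\cap\Cof_x^H(G)=\emptyset$ for $M\ge 2$, so $\widetilde{\psi_x^H}(U_M)=\{\infty\}$ and continuity at $P$ is immediate. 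Neither point affects the validity of the argument.
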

\begin{proof}
As in the proof of Theorem \ref{thm:surj}, it is sufficient to show the case $H$ is a free abelian group, but this case easily follows from the arguments of the proof of Proposition \ref{prop:continuous}.
\end{proof}

\section{Actions on real lines and circles}

 As is mentioned in many articles (See \cite{g}, \cite{n}), left orderings of groups are closely related to the group action on the real line and circles.  In this section we provide relations between our quasi-morphisms and dynamics of $G$ which are related to the left ordering. In this section, we always assume that $G$ is a countable group.
 
First we review the definition of the bounded Euler class, which contains much information about the group actions on circle.  
Let us consider the central extension
\[ 1 \rightarrow \Z \rightarrow \widetilde{\textrm{Homeo}_{+}}(S^{1}) \stackrel{p}{\rightarrow} \textrm{Homeo}_{+}(S^{1}) \rightarrow 1\]
 where 
$\widetilde{\textrm{Homeo}_{+}}(S^{1}) =\{\widetilde{f} \in \textrm{Homeo}_{+}(\R) \: | \: \widetilde{f}(x+1) = \widetilde{f}(x) + 1 \;\;(\forall x \in \R) \}$. 
Take a set-theoretical section $\sigma$ to $p$ by 
$\sigma(f) = \widetilde{f}$, where $\widetilde{f}$ is an unique element of $p^{-1}(f)$ such that $\widetilde{f}(0)\in [0,1)$, and let $c$ be the cochain of $\textrm{Homeo}_{+}(S^{1})$, defined by $c(f,g) = \sigma(fg)^{-1}\sigma(f)\sigma(g)$. Since $c(f,g) = \sigma(fg)^{-1}\sigma(f)\sigma(g)$ lies in the kernel of $p$ = $\Z$, hence we may regard $c$ as a $\Z$-valued bounded $2$-cocycle of $\textrm{Homeo}_{+}(S^{1})$.
The {\it bounded Euler class} is a $2$nd bounded cohomology class $[eu]$ of $\textrm{Homeo}_{+}(S^{1})$ defined by $c$.
For a group action on circle $\phi: G \rightarrow \textrm{Homeo}_{+}(S^{1})$, we define the {\it bounded Euler class} of the action $\phi$ by $eu(\phi) = \phi^{*}([eu]) \in H_{b}^{2}(G;\Z)$.

Next we review the relationships between orderings and dynamics.
Let $G \rightarrow \textrm{Homeo}_{+}(\R)$ be a faithful action of a countable group $G$ on the real line.
By choosing a countable dense sequence $\{r_{n}\}_{n >0}$ of $\R$, we define the left-ordering $<$ of $G$ by defining $g < g'$ if and only if the sequence of reals $\{g(r_{n})\}$ is bigger than the sequence $\{g'(r_{n})\}$ with respect to the lexicographical ordering of $\R^{\mathbb{N}}$.
 
Conversely, for an left ordering $P \in \LO(G)$, we can construct a faithful action of $G$ on the real line $A_{P}: G \rightarrow \textrm{Homeo}_{+}(\R)$, which we call a {\it dynamical realization} of the ordering $P$ as follows.

Take a numbering $\{g_{i}\}_{i\geq 0}$ of $G$, and define the real number $t(g_{i})$ in the following inductive way. First we define $t(g_{0}) = 0$. For $i \geq 1$, we define $t(g_{i})$ by
\[ t(g_{i}) = \left\{ \begin{array}{ll}
 \max \;\{ t(g_{0}), \ldots, t(g_{i-1})\} + 1 \;\;\;\;  & g_{i} >_{P} \max\{ g_{0},\ldots,g_{i-1}\}\\
 \min  \;\{ t(g_{0}), \ldots, t(g_{i-1})\} - 1 \;\;\;\;   & g_{i} <_{P} \min\{ g_{0},\ldots,g_{i-1}\} \\
 ( t(g_{m})+t(g_{M}))\slash 2 \;\;\;\; & g_{m} <_{P} g_{i} <_{P} g_{M}  \textrm{ and } \\
 & \;\;\;(g_{m},g_{M}) \cap \{g_{1},\ldots, g_{i-1}\} = \phi 
\end{array}
\right.
\]
Here $(g_{m},g_{M})$ is a subset $\{ g \in G \: | \: g_{m} <_{P} g <_{P} g_{M} \}$.

Now we define the action of $G$ on the subset $\{ t(g_{i})\}$ of $\R$ by $g \cdot t(g_{i}) = t(gg_{i})$. By extending this action to whole of $\R$, we obtain the desired action $A_{P}$.

Although this construction depends on a choice of the numbering and extensions, but its topological conjugacy class of the dynamical realization is independent of these choices.
From now on, we always choose a numbering so that $g_{0} = id$. Then by construction, $f <_{P} g$ if and only if $[A_{P}(f)] (0) < [A_{P}(g)](0)$.

Let $H$ be a subgroup of the left orderable group $G$ and take $x \in Z_{G}(H)$, the centralizer of $H$ in $G$. 
For $P \in \Cof_{x}^{H}(G)$, let us consider its dynamical realization $A_{P}$. 
We may assume that $A_{P}(x)$ acts $\R$ as translation $r \rightarrow r+1$ by taking an appropriate conjugation.   We denote this normalized dynamical realization by $\widetilde{A_{P,x}}$.
Since $x \in Z_{G}(H)$, $[\widetilde{A_{P,x}}(h)] (r + 1) =[\widetilde{A_{P,x}}(h)](r) + 1 $ holds for all $h\in H$ and $r \in \R$. Therefore the restriction of $\widetilde{A_{P,x}}$ to $H$ induces an action of $H$ on $\R \slash \langle \widetilde{A_{P,x}}(x) \rangle = S^{1}$. We denote this action by $A_{P,x}: H \rightarrow \textrm{Homeo}_{+} (S^{1})$ and call it the {\it associated circle action}.

Now we provide a cohomological interpretation of our map $\psi_{x}^{H}$ via the associated circle action. This theorem implies, $\psi_{x}^{H}$ can be seen as a characteristic class of left orderings, which extends the bounded Euler class of circle actions.
 
\begin{thm}
\label{thm:dynamics}
$ \psi_{x}^{H}(P) = - eu(A_{P,x})$. 
\end{thm}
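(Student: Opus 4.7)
The plan is to realize both $\psi_x^H(P)$ and $eu(A_{P,x})$ by explicit $\Z$-valued $2$-cocycles on $H$ that differ only by a sign. I treat the case $x >_P 1$; the case $x <_P 1$ is symmetric.

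The first step is to fix a careful normalization of the dynamical realization. Starting from $A_P$, which by the construction satisfies $f <_P g \iff A_P(f)(0) < A_P(g)(0)$ at the basepoint $0 = t(\textrm{id})$, choose $\phi \in \textrm{Homeo}_+(\R)$ conjugating $A_P(x)$ to $T_1$, translation by one. Since $\phi$ is determined only up to post-composition by $\Z$-equivariant homeomorphisms of $\R$, I may further arrange $\phi(0) = 0$. Writing $\widetilde{A_{P,x}} = \phi\, A_P\, \phi^{-1}$ for the resulting normalized realization, one then has
\[ f <_P g \iff \widetilde{A_{P,x}}(f)(0) < \widetilde{A_{P,x}}(g)(0), \qquad \widetilde{A_{P,x}}(x^N)(0) = N. \]

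The second step identifies $\rho_{x,P}^H$ as a floor function of the lifted action. The defining inequalities $x^N \leq_P h <_P x^{N+1}$ become $N \leq \widetilde{A_{P,x}}(h)(0) < N+1$, so
\[ \rho_{x,P}^H(h) = \lfloor \widetilde{A_{P,x}}(h)(0) \rfloor. \]
Let $\sigma(A_{P,x}(h))$ denote the canonical section used to define the Euler cocycle, namely the unique lift whose value at $0$ lies in $[0,1)$. The floor identity is equivalent to $\widetilde{A_{P,x}}(h) = T^{\rho_{x,P}^H(h)}\, \sigma(A_{P,x}(h))$ in the central extension $\widetilde{\textrm{Homeo}_+}(S^1)$.

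The third step is a direct computation. Since $\widetilde{A_{P,x}}$ is a homomorphism and $T$ is central, substituting the above factorization into $\widetilde{A_{P,x}}(h_1 h_2) = \widetilde{A_{P,x}}(h_1)\widetilde{A_{P,x}}(h_2)$ yields, after identifying $\Z = \langle T \rangle$,
\[ c(h_1, h_2) = \sigma(A_{P,x}(h_1 h_2))^{-1}\sigma(A_{P,x}(h_1))\sigma(A_{P,x}(h_2)) = \rho_{x,P}^H(h_1 h_2) - \rho_{x,P}^H(h_1) - \rho_{x,P}^H(h_2). \]
In the sign convention $d\phi(h_1,h_2) = \phi(h_1) - \phi(h_1 h_2) + \phi(h_2)$ used in Lemma \ref{lem:boundhom}, this is precisely $-d\rho_{x,P}^H(h_1, h_2)$, so passing to cohomology classes gives $eu(A_{P,x}) = [c] = -[d\rho_{x,P}^H] = -\psi_x^H(P)$. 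The one delicate point in this plan is the basepoint normalization $\phi(0) = 0$, which is what guarantees that the ordering criterion survives in a form usable at $0$; once this bookkeeping is done, the identification $\rho_{x,P}^H = \lfloor \widetilde{A_{P,x}}(\cdot)(0) \rfloor$ is automatic and the cocycle comparison is mechanical.
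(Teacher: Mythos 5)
Your proof is correct and follows essentially the same route as the paper's: normalize the dynamical realization so that $x$ acts by the unit translation, identify $\rho_{x,P}^{H}(h)=\lfloor \widetilde{A_{P,x}}(h)(0)\rfloor$, equivalently $\widetilde{A_{P,x}}(h)=T^{\rho_{x,P}^{H}(h)}\sigma(A_{P,x}(h))$, and then compute the Euler cocycle directly. Your version is in fact tidier on the two delicate points: the basepoint normalization $\phi(0)=0$ (which the paper leaves implicit) and the sign bookkeeping --- your cocycle value $\rho_{x,P}^{H}(h_1h_2)-\rho_{x,P}^{H}(h_1)-\rho_{x,P}^{H}(h_2)$ is the one consistent with the convention $d\phi(f,g)=\phi(f)-\phi(fg)+\phi(g)$ and with the stated conclusion $\psi_{x}^{H}(P)=-eu(A_{P,x})$, whereas the paper's displayed formula carries the opposite sign.
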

\begin{proof}
For $f,g \in H$, let $F = A_{P,x}(f)$ and $G = A_{P,x}(g)$. By definition of the section $\sigma$, $\sigma(F)(0) = [\widetilde{A_{x,P}}(f)](0) - \rho_{x,P}(f)$ holds. 
 Since $\sigma(FG)^{-1}\sigma(F)\sigma(G) = [\sigma(FG)^{-1}](0) + [\sigma(F)](0) + [\sigma(G)](0)$, we conclude that 
 \[ \sigma(FG)^{-1}\sigma(F)\sigma(G) = -\rho_{x,P}(fg) + \rho_{x,P}(f) + \rho_{x,P}(g) .\]
Thus, by definition of the bounded Euler class, we have $ \psi_{x}^{H}(P) = - eu(A_{P})$.
\end{proof}

Recall that a left ordering $P \in \LO(G)$ is {\it dense} if $P$ admits no minimal positive element. For a dense ordering $P$, from the construction of the dynamical realization, all orbits of the dynamical realization are dense.  
We say two dense left orderings are {\it dynamically equivalent} if their dynamical realizations are conjugate. Recall that $\phi,\psi \in  \textrm{Homeo}_{+} (\R)$ are conjugate if there exists $f \in  \textrm{Homeo}_{+} (\R)$ such that $\phi \circ f = f \circ \psi $ holds.

\begin{exam}
The group $G$ itself naturally acts on $\LO(G)$ by $g: P \rightarrow P \cdot g$. 
Two left orderings are called {\it conjugate} if they belongs to the same $G$-orbit.
Let $P, Q$ be two left orderings which are conjugate by $h\in G$, and take a dynamical realization $A_{P}$ of $P$.
Then we can choose the dynamical realizations $A_{Q}$ as $A_{Q}(g) = A_{P}(h^{-1}gh)$, so they are dynamically equivalent.
\end{exam}

Now we show that $\psi_{x}$ classify the dynamically equivalence.

\begin{thm}
\label{thm:clasify}
Let $x \in Z(G)$ and $P,Q \in \Cof_{x}(G)$ be dense orderings.
Then $P$ and $Q$ are dynamically equivalent if and only if $\psi_{x}(P) = \psi_{x}(Q)$.
\end{thm}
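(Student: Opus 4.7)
The plan is to reduce the theorem to the classical correspondence between bounded Euler class and conjugacy of circle actions. By Theorem \ref{thm:dynamics}, $\psi_{x}(P) = \psi_{x}(Q)$ is equivalent to $eu(A_{P,x}) = eu(A_{Q,x})$, so it suffices to show that the $\R$-actions $A_{P}, A_{Q}$ are conjugate if and only if the associated circle actions $A_{P,x}, A_{Q,x}$ have equal bounded Euler classes.

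For the forward direction, assume a conjugation $f \in \textrm{Homeo}_{+}(\R)$ with $A_{Q}(g) = f A_{P}(g) f^{-1}$ for all $g \in G$. Since each normalized realization $\widetilde{A_{P,x}}$, $\widetilde{A_{Q,x}}$ is obtained from $A_{P}$, $A_{Q}$ by a further conjugation, composing produces $\widetilde{f} \in \textrm{Homeo}_{+}(\R)$ with $\widetilde{A_{Q,x}}(g) = \widetilde{f}\, \widetilde{A_{P,x}}(g)\, \widetilde{f}^{-1}$. Evaluating at $g=x$, where both sides act as translation by $1$, forces $\widetilde{f}(r+1) = \widetilde{f}(r)+1$. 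Hence $\widetilde{f}$ descends to $f_{\circ} \in \textrm{Homeo}_{+}(S^{1})$ conjugating $A_{P,x}$ to $A_{Q,x}$, and conjugate circle actions have equal bounded Euler classes.

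For the reverse direction, density of $P$ and $Q$ implies that the orbit of $0$ under $A_{P}$ (resp. $A_{Q}$) is dense in $\R$, so after projection the actions $A_{P,x}, A_{Q,x}$ on $S^{1}$ are minimal. Now I would invoke the theorem of Ghys (from his paper on groups of circle homeomorphisms and bounded cohomology): two representations of a countable group into $\textrm{Homeo}_{+}(S^{1})$ with equal bounded Euler class are semi-conjugate, and for minimal actions semi-conjugacy automatically upgrades to a topological conjugacy. This gives $f_{\circ} \in \textrm{Homeo}_{+}(S^{1})$ conjugating $A_{P,x}$ to $A_{Q,x}$. Choose any lift $\widetilde{f} \in \textrm{Homeo}_{+}(\R)$; since $f_{\circ}$ is orientation-preserving of degree one, $\widetilde{f}$ commutes with integer translation, and the conjugacy relation on $S^{1}$ lifts to $\widetilde{A_{Q,x}}(g) = \widetilde{f}\, \widetilde{A_{P,x}}(g)\, \widetilde{f}^{-1}$ on $\R$ (one checks this holds on the $\langle x \rangle$-orbit of any lifted point, then extends by continuity). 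Reversing the normalizing conjugations, $A_{P}$ and $A_{Q}$ become conjugate in $\textrm{Homeo}_{+}(\R)$, i.e., $P$ and $Q$ are dynamically equivalent.

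The main obstacle is the reverse direction, which rests on the Ghys/Matsumoto classification of circle actions by their bounded Euler class up to semi-conjugacy, together with the minimality-implies-conjugacy upgrade. The minor technical points are checking that dense orderings do produce minimal circle actions (clear from the construction of the dynamical realization, since the orbit of $0$ is dense in $\R$), and verifying that the degree-one lift $\widetilde{f}$ indeed intertwines the full $G$-actions on $\R$ rather than merely on $S^{1}$ — this is where commutation of $\widetilde{f}$ with the translation $\widetilde{A_{P,x}}(x)$ is essential.
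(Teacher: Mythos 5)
Your proof is correct and follows essentially the same route as the paper: reduce via Theorem \ref{thm:dynamics} to comparing bounded Euler classes of the associated circle actions, note that density of the orderings makes these actions minimal, and invoke Ghys' theorem that minimal circle actions are conjugate if and only if their bounded Euler classes agree. Your treatment of the lifting and descending steps (checking that the conjugating homeomorphism commutes with the integer translation) is in fact more careful than the paper's own two-line version of that argument.
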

\begin{proof}
First observe that if $P$ and $Q$ are dynamically equivalent then $A_{P,x}$ and $A_{Q,x}$ are conjugate.
Conversely, assume that $A_{P,x}$ and $A_{Q,x}$ are conjugate by $\phi$. Then the lift  $\sigma(\phi)$ provides a conjugation between $\widetilde{A_{P,x}}$ and $\widetilde{A_{Q,x}}$.
Since $P$ and $Q$ are dense, the all orbits of their associated circle actions $A_{P,x}$ and $A_{Q,x}$ are dense on $S^{1}$. By Ghys' theorem \cite[Theorem 6.5]{g}, $A_{P,x}$ and $A_{Q,x}$ are conjugate if and only if $eu(A_{P,x}) = eu(A_{Q,x})$. Thus by Theorem \ref{thm:dynamics} we obtain the desired result.
\end{proof}

Using this, we give a cohomological characterization of infinite Thurston type orderings of the braid groups. Thurston-type orderings are orderings of the braid group $B_{n}$ which are defined by the Nielsen-Thurston action, which is the action of $B_{n}$ on the real line $\R$ constructed by using Hyperbolic geometry. These orderings are generalization of the standard Dehornoy ordering $<_{D}$.  See \cite{ddrw},\cite{sw} for details.

\begin{cor}
A dense left ordering $P$ of the $n$-braid group $B_{n}$ is a Thurston type ordering if and only if $\psi_{\Delta^{2}}(P) = \psi_{\Delta^{2}}(D)$, where $D$ represents the Dehornoy ordering.
\end{cor}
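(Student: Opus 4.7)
The plan is to reduce the corollary to a direct application of Theorem \ref{thm:clasify}. Since $\Delta^{2}$ generates the centre of $B_{n}$, we have $\Delta^{2}\in Z(B_{n})$, and as recalled in the earlier example $\Delta^{2}$ is universally co-final so $\Cof_{\Delta^{2}}(B_{n})=\LO(B_{n})$. Theorem \ref{thm:clasify} therefore says that for any two dense orderings $P,Q$ of $B_{n}$, the equality $\psi_{\Delta^{2}}(P)=\psi_{\Delta^{2}}(Q)$ is equivalent to $A_{P}$ and $A_{Q}$ being conjugate in $\textrm{Homeo}_{+}(\R)$. It thus suffices to prove that a dense left ordering $P$ of $B_{n}$ is of Thurston type if and only if $A_{P}$ is conjugate to $A_{D}$.

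By definition (see \cite{sw,ddrw}), a Thurston-type ordering is obtained from the Nielsen--Thurston action $\phi_{NT}:B_{n}\to\textrm{Homeo}_{+}(\R)$ by the lexicographic dense-sequence recipe recalled at the beginning of Section 3, so its dynamical realization agrees with $\phi_{NT}$ up to topological conjugacy. The Dehornoy ordering $<_{D}$ is itself such an ordering, hence $A_{D}$ is conjugate to $\phi_{NT}$. This immediately gives the forward direction: if $P$ is Thurston-type and dense, then $A_{P}$ is conjugate to $\phi_{NT}$, hence to $A_{D}$, so by Theorem \ref{thm:clasify} we get $\psi_{\Delta^{2}}(P)=\psi_{\Delta^{2}}(D)$.

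For the converse, if $\psi_{\Delta^{2}}(P)=\psi_{\Delta^{2}}(D)$ then Theorem \ref{thm:clasify} yields a conjugation in $\textrm{Homeo}_{+}(\R)$ between $A_{P}$ and $A_{D}$, and composing with the conjugation between $A_{D}$ and $\phi_{NT}$ shows $A_{P}$ is conjugate to $\phi_{NT}$. Pulling a dense sequence back through this conjugation and applying the lexicographic construction of Section 3 to $\phi_{NT}$ recovers $P$, which exhibits $P$ as a Thurston-type ordering. The only non-formal step in the whole argument is this identification between the class of Thurston-type orderings and the class of dense orderings whose dynamical realization is conjugate to $\phi_{NT}$; this is the main (but essentially short) obstacle and relies on unpacking the definitions in \cite{ddrw,sw} together with the conjugacy-invariance of the dynamical realization.
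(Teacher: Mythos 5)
Your argument is correct and is exactly the intended deduction: the paper states this corollary without proof, as an immediate consequence of Theorem \ref{thm:clasify} (applicable since $\Delta^{2}$ is central and universally co-final, so the domain is all dense orderings) together with the definition of Thurston-type orderings as those induced by the Nielsen--Thurston action on $\R$. The one step you rightly flag as non-formal --- identifying Thurston-type orderings with the dense orderings whose dynamical realization is conjugate to $\phi_{NT}$ --- is likewise taken for granted in the paper, which simply points to the references for it.
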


\begin{rem}
We may define the notion of {\it semi-dynamical equivalence} of general left orderings by using the notion of semi-conjugation instead of conjugation. This relation is the same as the dynamical equivalence if we consider dense orderings. Since the bounded Euler class is a complete invariant of semi-conjugacy \cite{g}, we can rephrase Theorem \ref{thm:clasify} as 
{\it Two left orderings $P, Q \in \Cof_{x}(G)$ are semi-dynamically equivalent if and only if $\psi_{x}(P) = \psi_{x}(Q)$}.
\end{rem}

 \section{Convexity criterion}
 
In this section, we utilize the maps $\psi_{x}^{H}$ and $\rho_{x,P}^{H}$ to study the structure of the orderings.
Let $A \subset B$ be subsets of the left-orderable group $G$.
We say $A$ is {\it convex} in $B$ with respect to the ordering $P \in \LO(G)$ if some elements $b \in B$ satisfies the inequality $a <_{P} b <_{P} a'$ for some $a,a' \in A$, then $b \in A$.
Convex subgroups have the following properties.

\begin{lem}
\label{lem:convexproperty}
Let $G$ be the left-orderable group and $B$ be a subgroup of $G$.
Let $A,A'$ be subgroup of $B$, which are convex in $B$ with respect to an ordering $P \in \LO(G)$. Then either $A \subset A'$ or $A' \subset A$ holds. 
\end{lem}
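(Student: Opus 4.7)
The plan is a direct proof by contradiction. Suppose neither containment holds, so that we can pick elements $a \in A \setminus A'$ and $a' \in A' \setminus A$. Since $A$ and $A'$ are both subgroups of $B$, they both contain the identity, and each contains an element together with its inverse. This last point lets me reduce to a convenient sign: if $a <_P 1$, I replace $a$ by $a^{-1}$, which lies in $A$ but not in $A'$ (membership in a subgroup is invariant under taking inverses) and satisfies $a^{-1} >_P 1$ by left-invariance of $<_P$. Similarly for $a'$. Thus without loss of generality I may assume $1 <_P a$ and $1 <_P a'$.

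Now I compare $a$ and $a'$. The case $a = a'$ is immediately impossible since then $a \in A'$. If $a <_P a'$, then $1 <_P a <_P a'$, and since $1, a' \in A'$ and $A'$ is convex in $B$ (and $a \in B$, because $a \in A \subset B$), convexity forces $a \in A'$, contradicting $a \notin A'$. Symmetrically, if $a' <_P a$, then $1 <_P a' <_P a$ with $1, a \in A$ and $a' \in B$, so convexity of $A$ in $B$ gives $a' \in A$, again a contradiction. Either way we reach a contradiction, proving $A \subset A'$ or $A' \subset A$.

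No serious obstacle is expected; the argument is essentially a three-line trichotomy. The only delicate point is the sign reduction at the beginning, where one must invoke both that $A,A'$ are subgroups (to replace elements by inverses without changing set-membership status) and the left-invariance of the ordering (to flip inequalities when taking inverses). Everything else follows directly from the definition of convexity applied twice.
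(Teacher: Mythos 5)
Your proof is correct and follows essentially the same route as the paper: pick $a \in A \setminus A'$ and $a' \in A' \setminus A$, reduce to the case where both are positive, and derive a contradiction from convexity via the trichotomy. You in fact justify the ``without loss of generality'' sign reduction more carefully than the paper does, which simply asserts it.
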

\begin{proof}
Assume that we have neither  $A \subset A'$ nor $A' \subset A$.
Then there exist elements $a \in A- A'$ and $a' \in A' -A$. With no loss of generalities, we may assume $1<_{P} a$ and $1<_{P} a'$. 
If $a' <_{P} a$, then by convexity of $A$, $a' \in A$. Thus $a <_{P} a'$. On he other hand, by the same reason, we have $a' <_{P} a$. This is a contradiction.
\end{proof}

The following lemma is simple but provides a useful criterion for convexity of some subgroups. 

\begin{lem}
\label{lem:psivalue}
Let $A$ be a subgroup of $G$ generated by $x_{1},\ldots,x_{m}$.
Let $w$ be an element of $A$, and fix a word expression $w=x_{i_{1}}^{j_{1}}x_{i_{2}}^{j_{2}}\cdots x_{i_{n}}^{j_{n}}$ and define $e_{p} = \sum_{q\:;\: i_{q}=p} j_{q}$.
For an ordering $P \in \InvLO_{x}^{A}(G)$, if $\rho_{x,P}^{A}(w^{N}) = 0$ holds for all $N \in \Z$ (This assumption is satisfied, for example, $\langle w \rangle$ is convex in $A$), then
\[ \left| \sum_{q=1}^{n} e_{q} \overline{\rho_{x,P}^{A}}(x_{q}) \right| \leq n\]
 holds. Moreover, if $A$ is an abelian group, then
\[ \sum_{q=1}^{n} e_{q} \overline{\rho_{x,P}^{A}}(x_{q}) = 0 \]
 holds.
\end{lem}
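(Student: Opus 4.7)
The plan is to extract the result directly from the structural properties of the homogeneous stable quasi-morphism $\overline{\rho_{x,P}^{A}}$ recorded in Lemma \ref{lem:stablemap}, after first observing that the hypothesis forces $\overline{\rho_{x,P}^{A}}(w)=0$. Indeed, from $\rho_{x,P}^{A}(w^{N})=0$ for all $N\in\Z$ one gets
\[
\overline{\rho_{x,P}^{A}}(w)\;=\;\lim_{N\to\infty}\frac{\rho_{x,P}^{A}(w^{N})}{N}\;=\;0,
\]
so the entire statement reduces to understanding how $\overline{\rho_{x,P}^{A}}(w)$ decomposes along the given word expression.

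Using the factorization $w=x_{i_{1}}^{j_{1}}\cdots x_{i_{n}}^{j_{n}}$, I would apply Lemma \ref{lem:stablemap}(3) to $h_{q}:=x_{i_{q}}^{j_{q}}$ with $k=n$; since $\overline{\rho_{x,P}^{A}}(w)=0$, this yields
\[
\Bigl|\sum_{q=1}^{n}\overline{\rho_{x,P}^{A}}(x_{i_{q}}^{j_{q}})\Bigr|\leq n-1.
\]
The homogeneity in Lemma \ref{lem:stablemap}(2) rewrites $\overline{\rho_{x,P}^{A}}(x_{i_{q}}^{j_{q}})=j_{q}\,\overline{\rho_{x,P}^{A}}(x_{i_{q}})$, and collecting the positions $q$ according to the generator index $p$ gives
\[
\sum_{q=1}^{n}j_{q}\,\overline{\rho_{x,P}^{A}}(x_{i_{q}})\;=\;\sum_{p=1}^{m}\Bigl(\sum_{q\,:\,i_{q}=p}j_{q}\Bigr)\overline{\rho_{x,P}^{A}}(x_{p})\;=\;\sum_{p=1}^{m}e_{p}\,\overline{\rho_{x,P}^{A}}(x_{p}),
\]
which is the first (inequality) conclusion, with the slightly stronger bound $n-1$.

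For the abelian refinement, the key additional input is the standard fact that a homogeneous real-valued quasi-morphism restricted to an abelian subgroup is a genuine homomorphism. I would prove this directly from the defect-one property: for commuting $a,b\in A$ the identity $(ab)^{N}=a^{N}b^{N}$ combined with $|\rho_{x,P}^{A}(a^{N}b^{N})-\rho_{x,P}^{A}(a^{N})-\rho_{x,P}^{A}(b^{N})|\leq 1$ yields $\overline{\rho_{x,P}^{A}}(ab)=\overline{\rho_{x,P}^{A}}(a)+\overline{\rho_{x,P}^{A}}(b)$ after dividing by $N$ and taking $N\to\infty$. Once $\overline{\rho_{x,P}^{A}}|_{A}$ is known to be a homomorphism, evaluating it on $w$ via the factorization gives $\overline{\rho_{x,P}^{A}}(w)=\sum_{p=1}^{m}e_{p}\,\overline{\rho_{x,P}^{A}}(x_{p})$, which must vanish since $\overline{\rho_{x,P}^{A}}(w)=0$. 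There is no real obstacle here: the substantive content is already packaged inside Lemma \ref{lem:stablemap}, and the only fiddly step is the bookkeeping in the regrouping from word positions to generator indices.
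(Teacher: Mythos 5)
Your proof is correct and follows essentially the same route as the paper: both conclusions come from Lemma \ref{lem:stablemap} applied to a factorization of $w^{N}$ (the paper uses the $nN$-fold factorization and lets $N\to\infty$, while you take $N=1$, which even yields the sharper bound $n-1$), and your abelian step --- deducing additivity of $\overline{\rho_{x,P}^{A}}$ on commuting elements from the defect-one estimate on $N$-th powers --- is just a repackaging of the paper's application of Lemma \ref{lem:stablemap}(3) to $w^{N}=x_{1}^{Ne_{1}}\cdots x_{m}^{Ne_{m}}$. No gaps.
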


\begin{proof}
By Lemma \ref{lem:stablemap}, we have 
\[ \left| \sum_{q=1}^{n} N e_{q} \overline{\rho_{x,P}^{A}}(x_{q}) \right|  \leq nN -1 \]
holds for all $N>0$, thus we conclude that 
\[ \left| \sum_{q=1}^{n}e_{q} \overline{\rho_{x,P}^{A}}(x_{q}) \right|  \leq n - \frac{1}{N}  \]
holds for all $N>0$. 
Similarly, if $A$ is abelian, then $w^{N} = x_{1}^{Ne_{1}}x_{2}^{Ne_{2}}\cdots x_{m}^{Ne_{m}}$, so we have the inequality 
\[ \left| \sum_{q=1}^{n}e_{q} \overline{\rho_{x,P}^{A}}(x_{q}) \right|  \leq \frac{m-1}{N}. \]
for all $N >0$.
\end{proof}

\begin{exam}
Let $B_{3}=\langle x,y \: | \: x^{2} = y^{3}\rangle$ be the braid group of $3$-strands.
Let us consider the cyclic subgroup $A= \langle w \rangle$, where $w=xy^{2}=x^{3}y^{-1}$.
Assume that there exists a left-ordering $P \in \InvLO_{x}^{A}(G)$ which makes $A$ convex.
Then, by Lemma \ref{lem:psivalue} we have two inequalities
\[ |1+ 2\overline{\rho_{x,P}^{B_{3}}}(y)| \leq 2, \; \; |3 -\overline{\rho_{x,P}^{B_{3}}}(y)| \leq 2. \]
However, there is no real number which satisfies the above two inequalities, so we conclude that $\langle  w \rangle$ is not convex.
\end{exam} 

Now we provide a criterion for convexity for abelian subgroups.

\begin{thm}
\label{thm:convex}
Let $G$ be a left orderable group and $x \in G$.
Let $A$ be a rank $n$ free abelian subgroup of $G$ generated by $x_{1},\ldots,x_{n}$ and $B$ be a rank $k(<n)$ free abelian subgroup of $A$ generated by 
$\{ w_{i}= x_{1}^{e_{1}^{i}}\cdots x_{n}^{e_{n}^{i}} \}_{i= 1,\ldots,k}$. 
Then, the subgroup $B$ is convex in $A$ with respect to the ordering $P \in \InvLO_{x}^{A}(G) \cap \Cof_{x}^{A}(G)$ if and only if the following three conditions are satisfied.
\begin{enumerate}
\item Integers $e^{i}_{1},\ldots,e^{i}_{n}$ have no common divisor for all $i = 1,\ldots,k$.
\item $\sum_{j} e_{j}^{i}\overline{\rho_{x,P}^{A}}(x_{j}) = 0$ holds for all $i = 1,\ldots,k$.
\item $\dim_{\Q} \textrm{span}_{\Q}\{ \overline{\rho_{x,P}^{A}}(x_{1}),\ldots, \overline{\rho_{x,P}^{A}}(x_{n}) \} = n-k$.
\end{enumerate} 
\end{thm}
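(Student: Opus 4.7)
The plan is to introduce $K := \ker \overline{\rho_{x,P}^{A}} \subseteq A$ and show that the three conditions jointly amount to the identification $B = K$, which is in turn equivalent to the convexity of $B$. Since $A$ is abelian, the stable map $\bar{\rho} := \overline{\rho_{x,P}^{A}} : A \to \R$ is actually a group homomorphism: combining homogeneity (Lemma \ref{lem:stablemap}(2)) with the defect bound $|\bar{\rho}(gh) - \bar{\rho}(g) - \bar{\rho}(h)| \leq 1$ and the identity $(gh)^N = g^N h^N$ in abelian $A$ rescales the defect to $1/N$, which vanishes in the limit. Moreover, $a >_P 1$ implies $\bar{\rho}(a) \geq 0$ (otherwise $a^{-N}$ would surpass every positive power of $x$), so $K$ is a convex subgroup of $A$.

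For the $(\Rightarrow)$ direction, assume $B$ is convex and I would prove (1), (2), (3) in turn. Condition (1) is a direct convexity argument: if $w_i = v^d$ with $d \geq 2$ and $v \in A$, take $v >_P 1$ without loss of generality; then $1 <_P v <_P w_i \in B$, so convexity places $v \in B$, and expanding $v = \prod w_j^{c_j}$ in the free basis of $B$ while matching $v^d = w_i$ yields $d c_i = 1$, which is impossible. For (2), Lemma \ref{lem:convexproperty} makes the two convex subgroups $B$ and $K$ comparable; the case $K \subsetneq B$ is excluded by a rank count, as $\mathrm{rank}\,\bar{\rho}(A) - \mathrm{rank}\,\bar{\rho}(B) = n - k > 0$ produces an $a \in A$ with $\bar{\rho}(a)$ outside the $\Q$-span of $\bar{\rho}(B)$; after arranging $\bar{\rho}(a) > 0$ and taking any $b_0 \in B$ with $\bar{\rho}(b_0) > 0$, the sandwich $1 <_P a <_P b_0^N$ for $N$ large contradicts convexity since $a \notin B$. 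Hence $B \subseteq K$, giving $\bar{\rho}(w_i) = 0$. Condition (3), which reads $\mathrm{rank}(K) = k$, I would deduce from the Archimedean structure of $A/K$ (embedded into $\R$ order-preservingly by $\bar{\rho}$): $K$ is the maximal proper convex subgroup of $A$, and convexity of $B$ together with (1) forces it to coincide with $K$.

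For the $(\Leftarrow)$ direction, the three conditions jointly yield $B = K$ and hence the convexity of $B$. Condition (2) gives $B \subseteq K$; condition (3) gives $\mathrm{rank}(K) = \mathrm{rank}(B)$, so $K/B$ is a finite torsion group; condition (1) is then invoked to force $K/B = 0$. The main obstacle in both directions is this passage from ``each $w_i$ is primitive'' to ``$B$ has no proper finite-index overgroup inside $K$''. Primitivity of individual basis elements is formally weaker than purity of the whole subgroup $B$, so bridging the gap requires a sandwich argument applied to a hypothetical root $v \in K \setminus B$ of an element of $B$, coupled with the chain structure of convex subgroups of $A$ established via Lemma \ref{lem:convexproperty}.
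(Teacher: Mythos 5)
Your route through $K=\ker\overline{\rho_{x,P}^{A}}$ is genuinely different from the paper's (the paper gets condition (2) directly from Lemma \ref{lem:psivalue} and condition (3) by induction on $n$ via Lemma \ref{lem:convexproperty}), and your preliminary observations are all correct: on the abelian group $A$ the stable map is a homomorphism, $K$ is convex, and every \emph{proper} convex subgroup of $A$ is contained in $K$, since a convex subgroup containing an element with nonzero $\overline{\rho_{x,P}^{A}}$-value swallows all of $A$ by exactly the sandwich you describe. But the two claims on which the whole proposal rests --- that convexity of $B$ forces $B=K$, and that conditions (1)--(3) force $B=K$ --- are both left unproved, and neither is true. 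For the first: $K$ is only the \emph{maximal} proper convex subgroup; convex subgroups form a chain, and any proper convex subgroup of $K$ is again convex in $A$ (lexicographic refinements of the ordering inside $K$ produce plenty of these). So convexity of $B$ does not give $\mathrm{rank}(K)=k$; your appeal to the Archimedean structure of $A/K$ establishes maximality of $K$, not that nothing convex sits strictly below it, and this is precisely the point where condition (3) has to be earned rather than asserted. Concretely, order $G=\Z^{3}=\langle x\rangle\times\langle x_{1}\rangle\times\langle x_{2}\rangle$ by the homomorphism sending $x\mapsto1$, $x_{1},x_{2}\mapsto0$, refined lexicographically with $x_{2}$ dominant: then $B=\langle x_{1}\rangle$ is convex in $A=\langle x_{1},x_{2}\rangle$ while $K=A$.

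For the second claim, you correctly flag the passage from ``each $w_{i}$ is primitive'' to ``$K/B=0$'' as the main obstacle, but the bridge you hope to build does not exist: primitivity of the individual generators genuinely fails to control the torsion of $K/B$, and no sandwich argument applied to a hypothetical root can recover it, because conditions (1)--(3) simply do not see that torsion. Take $A=\langle x_{1},x_{2},x_{3}\rangle$ inside $\Z^{4}=\langle x\rangle\times A$, put $B=\langle x_{1}x_{2},\,x_{1}x_{2}^{-1}\rangle$, and choose an ordering with $\overline{\rho_{x,P}^{A}}(x_{1})=\overline{\rho_{x,P}^{A}}(x_{2})=0$ and $\overline{\rho_{x,P}^{A}}(x_{3})=\sqrt{2}$. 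All three conditions hold, yet $K=\langle x_{1},x_{2}\rangle$ and $K/B\cong\Z/2$; whichever of $x_{1}^{\pm1}$ is $P$-positive is sandwiched between $1$ and its square $x_{1}^{\pm2}=(x_{1}x_{2})^{\pm1}(x_{1}x_{2}^{-1})^{\pm1}\in B$, so $B$ is not convex. Hence your $(\Leftarrow)$ direction cannot be closed as outlined. You should know that the paper's own proof of the converse founders on the same rock --- the step ``since $y\notin B$ we have $\dim<n-k$'' fails when the exponent vector of $y$ lies in the $\Q$-span of the $e^{i}$ without $y$ lying in $B$ --- so your analysis in fact isolates the hypothesis the statement is missing, namely that $B$ be an isolated (pure) subgroup of $A$ rather than merely generated by primitive elements; but as written the proposal is not a proof.
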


\begin{proof}
We prove the theorem by induction of $n$.
The case $n=1$ is obvious. Assume that we have already proved the theorem for $<n$.

Assume that $B$ is convex. Then, (1) is obvious, and the assertion (2) follows from Lemma \ref{lem:psivalue}. Assume that (3) does not hold.
Then, we can find other linearly independent linear relations of $\{ \overline{\rho_{x,P}^{A}}(x_{1}),\ldots, \overline{\rho_{x,P}^{A}}(x_{m}) \}$.
Thus, by choosing a re-numbering of $\{x_{i}\}$ if necessary, there exists an integer $m<n$ and integers $\{q^{i}_{j}\}_{j=1,\ldots,m}$ such that 
\[ \sum_{j=1}^{m} q^{i}_{j}\, \overline{\rho_{x,P}^{A}}(x_{j}) = 0, \textrm{ and  } 
 \dim_{\Q} \textrm{span}_{\Q} \{ \overline{\rho_{x,P}^{A}}(x_{1}),\ldots, \overline{\rho_{x,P}^{A}}(x_{m}) \} =m-k \]
hold. We can choose $\{q^{i}_{j}\}_{j=1,\ldots,m}$ so that they have no common divisors for all $i$. 
Let $A'$ be the subgroup of $A$ generated by $\{x_{1},\ldots,x_{m}\}$ and $B'$ be the subgroup of $A'$ generated by $\{ x_{1}^{q_{1}^{i}} \cdots x_{m}^{q_{m}^{i}}\}_{i=1,\cdots,k}$.
Then by induction, $B'$ is convex in $A'$. 
By definition of the subgroup $B'$, $B' \not \subset B\cap A'$ and $B \cap A' \not \subset B'$ holds. 
On the other hand, since $B$ is convex, $B\cap A'$ is a convex subgroup of $A'$.
This contradicts Lemma \ref{lem:convexproperty}.

Conversely, assume that all conditions {\it 1-3} hold but $B$ is not convex. Then, there exists an element $y = x_{1}^{q_{1}}\cdots x_{n}^{q_{n}}\not \in B$ such that $b <_{P} y <_{P} b'$ holds for some $b, b' \in B$.
With no loss of generality, we may assume that the integers $\{q_{i}\}$ have no common divisor.
Now, $\rho_{x,P}^{A}(y^{N}) = 0$ for all $N \in \Z$, so by Lemma \ref{lem:psivalue}, $\sum q_{i}\overline{\rho_{x,P}^{A}}(x_{i}) = 0 $.
Since $y \not \in B$, we have $\dim_{\Q} \textrm{span}_{\Q}\{ \overline{\rho_{x,P}^{A}}(x_{1}),\ldots, \overline{\rho_{x,P}^{A}}(x_{n}) \} < n-k$, which contradicts the condition (3).

\end{proof}

This theorem is useful to relate the topology of $\LO(G)$ and the structure of orderings. In particular, this implies that $\widetilde{\psi_{x}^{H}}$ serves as some kinds of good coordinates, which reflects the property of orderings.
The following observation is first done by Adam Clay, by using Sikora's description of $\LO(\Z^{2})$, which provides relationships between the topology of $\LO(\Z^{2})$ and convex subgroups. 

\begin{exam}
\label{exam:Sikora2}
Let us consider the rank two free abelian group $A= \langle x,y \rangle$. Then by Theorem \ref{thm:convex}, the subgroup $\langle  x^{p}y^{q}\rangle$ is convex with respect to the ordering $P \in \InvLO_{x}^{A}(A)$ if and only if 
$p,q$ has no common divisor and $\overline{\rho_{x,P}^{A}}(y) = -q \slash p$.
On the other hand, if $P \not \in \InvLO_{x}^{A}(A)$, then $\langle x\rangle$ is convex in $A$ with respect to the ordering $<_{P}$.
 
Thus, an ordering $P \in \LO(A)$ admits a convex non-trivial subgroup if and only if $\widetilde{\psi^{A}_{x,y}}(P) \in \Q \cup \{ \infty \}$. In Sikora's description of the space $\LO(G)$, this implies that $P \in \LO(A)$ admits a convex non-trivial subgroup if and only if $P$ is a rational point of $S^{1}_{[)}$ or $S^{1}_{(]}$.
\end{exam}

\vspace{1cm}
\begin{flushright}
\begin{minipage}{.5\textwidth}
\noindent
Tetsuya Ito \\
University of Tokyo\\
3-8-1 Komaba Meguro-Ku, Tokyo \\
Japan\\
e-mail: tetitoh@ms.u-tokyo.ac.jp
\end{minipage}
\end{flushright}

\end{document}